\providecommand{\U}[1]{\protect\rule{.1in}{.1in}}
\newtheorem{theorem}{Theorem}
\newtheorem{lemma}[theorem]{Lemma}
\newtheorem{remark}{Remark}
\newtheorem{definition}{Definition}
\newtheorem{example}{Example}
\newtheorem{proposition}[theorem]{Proposition}
\newtheorem{corollary}[theorem]{Corollary}
\newenvironment{proof}[1][Proof]{\noindent\textbf{#1.} }{\ \rule{0.5em}{0.5em}}
\begin{document}

\title{\textbf{Hamilton-Jacobi Diffieties}}
\author{\textsc{{L.{} Vitagliano}\thanks{\textbf{e}-\textit{mail}:
\texttt{lvitagliano@unisa.it}}} \\ {} \\ {\small {DipMat, Universit\`a degli Studi di Salerno, and}} \\ {\small {Istituto Nazionale di Fisica Nucleare, GC Salerno}} \\ {\small {Via Ponte don Melillo, 84084 Fisciano (SA), Italy}} \\ {} \\ {\small {Istituto Tullio Levi-Civita}} \\ {\small {via Colacurcio 54, 83050 Santo Stefano del Sole (AV), Italy}} }
\maketitle

%{\small {Istituto Tullio Levi-Civita}} \\ {\small via Colacurcio 54, 83050 Santo Stefano del Sole (AV), Italy}

\begin{abstract}
Diffieties formalize geometrically the concept of differential equations. We introduce and study Hamilton-Jacobi diffieties. They are finite dimensional subdiffieties of a given diffiety and appear to play a special role in the field theoretic version of the geometric Hamilton-Jacobi theory.
\end{abstract}

\newpage

\section*{Introduction}

Diffieties \cite{v01,v98} are geometric objects formalizing, in a coordinate
free manner, the concept of (systems of) differential equations (much as
varieties formalize, in a coordinate free manner, the concept of algebraic
equations). Roughly speaking, a diffiety is a manifold $\mathscr{E}$, of
countable dimension, endowed with an involutive distribution $\mathscr{C}$ of
finite dimension. Let $(\mathscr{E},\mathscr{C})$ be a diffiety representing a
certain system of PDEs $\mathscr{E}_{0}$. Then, (locally) maximal integral
submanifolds of $(\mathscr{E},\mathscr{C})$ represent (local) solutions of
$\mathscr{E}_{0}$. Notice that the Frobenius theorem fails for generic
infinite dimensional diffieties. In fact, a PDE needs not to possess solutions
with given (admissible) jet or may possess many solutions with the same
(admissible) jet. A particularly simple class of PDEs is made of compatible
PDEs of the form
\begin{equation}
\dfrac{\partial\boldsymbol{y}}{\partial x^{i}}=f_{i}(\boldsymbol{x}%
,\boldsymbol{y}). \label{24}%
\end{equation}
They are represented by a particularly simple class of diffieties, namely,
finite dimensional ones. For such diffieties the Frobenius theorem holds and,
in fact, integrable PDEs of the form (\ref{24}) possess exactly one (germ of a)
solution for any (admissible) jet. Given any PDE $\mathscr{E}_{0}$, one can
search for another compatible PDE $\mathscr{Y}$ of the simple form (\ref{24})
that implies $\mathscr{E}_{0}$. One may then obtain some solutions of
$\mathscr{E}_{0}$ by integrating such a $\mathscr{Y}$. Geometrically, one can
search for finite dimensional subdiffieties of the diffiety
$(\mathscr{E},\mathscr{C})$ representing $\mathscr{E}_{0}$. In this paper we
name as \emph{Hamilton-Jacobi diffieties} such subdiffieties and study their main
properties. The choice of this specific name is motivated by the following example.

Consider a system of ordinary (not necessarily autonomous) Euler--Lagrange
equations
\begin{equation}
\dfrac{\partial L}{\partial x^{i}}-\dfrac{d}{dt}\dfrac{\partial L}%
{\partial\dot{x}^{i}}=0,\quad i=1,\ldots,n. \label{OEL}%
\end{equation}
Suppose, for simplicity, that the Legendre transform
\[
p_{i}=\dfrac{\partial L}{\partial\dot{x}^{i}}(t,x,\dot{x})
\]
is invertible and let
\[
\dot{x}^{i}=v^{i}(t,x,p)
\]
be its inverse. Equation (\ref{OEL}) is represented by a suitable diffiety
$\mathscr{E}_{EL}$. Let
\begin{equation}
\tfrac{d}{dt}x^{i}=X^{i}(t,x) \label{26}%
\end{equation}
be a 1st order equation. It is of the form (\ref{24}) and, since it has just
one independent variable, it is automatically compatible. Therefore, it is
represented by a finite dimensional diffiety $\mathscr{Y}$. Moreover,
$\mathscr{Y}\subset\mathscr{E}_{EL}$, i.e., solutions of Eq.{} (\ref{26})
are solutions of Eq.{} (\ref{OEL}), iff
\begin{equation}
\dfrac{\partial L}{\partial x^{i}}(t,x,X)-\dfrac{\partial^{2}L}{\partial
\dot{x}^{i}\partial x^{j}}(t,x,X)X^{j}-\dfrac{\partial^{2}L}{\partial\dot
{x}^{i}\partial\dot{x}^{j}}(t,x,X)\left(  \dfrac{\partial X^{j}}{\partial
t}+X^{k}\dfrac{\partial X^{j}}{\partial x^{k}}\right)  =0 \label{15}%
\end{equation}
Notice that if $S=S(t,x)$ is a solution of the standard Hamilton-Jacobi (HJ)
equation
\begin{equation}
\dfrac{\partial S}{\partial t}+H(t,x,\partial S/\partial x)=f(t),\quad
H=v^{i}\dfrac{\partial L}{\partial\dot{x}^{i}}(t,x,v)-L(t,x,v), \label{14}%
\end{equation}
then
\[
X^{i}=v^{i}(t,x,\partial S/\partial x)
\]
is a solution of Eq.{} (\ref{15}). On the other hand, let $X^{i}%
=X^{i}(t,x)$ be a solution of Eq.{} (\ref{15}). Put
\[
T_{i}=\dfrac{\partial L}{\partial\dot{x}^{i}}(t,x,X).
\]
If
\begin{equation}
\dfrac{\partial T_{j}}{\partial x^{i}}-\dfrac{\partial T_{i}}{\partial x^{j}%
}=0, \label{16}%
\end{equation}
then $T_{i}=\partial S/\partial x^{i}$ for a solution $S=S(t,x)$ of the
standard HJ Eq.{} (\ref{14}). We conclude that the standard HJ equation is
basically equivalent to Eq.{} (\ref{15}) plus Eq.{} (\ref{16}).
Therefore, we name Eq.{} (\ref{15}) the \emph{generalized HJ equation} (see
below; see also \cite{c...06,c...09}). According to the above definition, its
solutions correspond to \emph{HJ subdiffieties} of $\mathscr{E}_{EL}$. This
motivates the choice of the name for these specific diffieties.

Similarly, under suitable integrability conditions, solutions of the field
theoretic HJ Eq.{} \cite{r73} may be interpreted as finite dimensional
subdiffieties of the diffiety of field equations. Hence, both the ordinary and the field
theoretic HJ theories fit well within the theory of HJ diffieties. In fact, we
show below that the geometric HJ theory presented in \cite{c...06,c...09} (see
also \cite{c...10} for the nonholonomic case) and generalized to the case of
regular field theories in \cite{v10}, can be naturally generalized to the case
of singular (i.e., gauge), higher derivative, field theories. In this
generalization, HJ diffieties play a central role.

The paper is divided into seven sections. In Section \ref{SecNotConv} we review
the basic differential geometric constructions used throughout the paper, and
collect notation and conventions. In Section \ref{SecGeomPDE} we recall the
concept of diffiety and briefly review the geometric theory of PDEs and its
application to the calculus of variations. In Section \ref{SecHJDiff} we
introduce the concept of HJ diffieties and elementary HJ diffieties, and study
their relation. In Section \ref{SecHJDiffEx} we illustrate the general theory
by presenting some simple examples of (elementary) HJ subdiffieties of
noteworthy diffieties. In Section \ref{SecLHFT} we review the geometric
formulation of higher derivative, Lagrangian and Hamiltonian field theories as
defined in \cite{v10b}. In Section \ref{SecHJEL} we propose a field theoretic
version of the geometric HJ theory of \cite{c...06,c...09} and show that it is
naturally linked to the theory of (elementary) HJ subdiffieties of the field
equations. In Section \ref{SecFinEx} we present one final example of
(elementary) HJ subdiffieties of an Euler--Lagrange equation. The review
Sections \ref{SecNotConv}, \ref{SecGeomPDE} and \ref{SecLHFT} are included to
make the paper as self-consistent as possible.

\section{Differential Geometric Background\label{SecNotConv}}

In this section we collect notation, conventions, and the main geometric
constructions needed in the paper.

Let $N$ be a smooth manifold. If $L\subset N$ is a submanifold, we denote the inclusion by
$i_{L}:L\hookrightarrow N$. We denote by $C^{\infty}(N)$ the
$\mathbb{R}$-algebra of smooth, $\mathbb{R}$-valued functions on $N$. We
will always understand a vector field $X$ on $N$ as a derivation $X:C^{\infty
}(N)\longrightarrow C^{\infty}(N)$. We denote by $D(N)$ the $C^{\infty}%
(N)$-module of vector fields over $N$, by $\Lambda(M)=\bigoplus_{k}%
\Lambda^{k}(N)$ the graded $\mathbb{R}$-algebra of differential forms over
$N$ and by $d:\Lambda(N)\longrightarrow\Lambda(N)$ the de Rham differential.
If $F:N_{1}\longrightarrow N$ is a smooth map of manifolds, we denote by
$F^{\ast}:\Lambda(N)\longrightarrow\Lambda(N_{1})$ the pull-back via $F$.

Let $\alpha:A\longrightarrow N$ be an affine bundle (for instance, a vector
bundle) and $F:N_{1}\longrightarrow N$ a smooth map of manifolds. Let
$\mathscr{A}$ be the affine space of smooth sections of $\alpha$. The affine
bundle on $N_{1}$ induced by $\alpha$ via $F$ will be denoted by $F^{\circ
}(\alpha):F^{\circ}(A)\longrightarrow N$:
\[%
\begin{array}
[c]{c}%
\xymatrix{F^\circ(A) \ar[r] \ar[d]_-{F^\circ(\alpha)} & A \ar[d]^-{\alpha} \\ N_1 \ar[r]^-F & N }
\end{array}
,
\]
and the space of its section by $F^{\circ}(\mathscr{A})$. For any section $a$
of $\alpha$ there exists a unique section of $F^{\circ}(\sigma)$, which we
denote by $F^{\circ}(a)$, such that the diagram
\[
\xymatrix{F^\circ(A) \ar[r]  & A  \\
N_1 \ar[r]^-F    \ar[u]^-{F^\circ(a)}                    &  N \ar[u]_-{a}}
\]
commutes. If $F:N_{1}\longrightarrow N$ is the embedding of a submanifold, we
also write $\bullet\ |_{F}$ for $F^{\circ}({}\bullet{})$.

We will often understand the sum over repeated upper-lower indices and
multi-indices. Our notation as regards multi-indices is as follows. We will use
the capital letters $I,J,K$ for multi-indices. Let $n$ be a positive integer. A
multi-index of length $k$ is a $k$tuple of indices $I=(i_{1},\ldots,i_{k})$,
$i_{1},\ldots,i_{k}\leq n$. We identify multi-indices differing only by the
order of the entries. If $I$ is a multi-index of length $k$, we put $|I|:=k$.
Let $I=(i_{1},\ldots,i_{k})$ and $J=(j_{1},\ldots,j_{h})$ be multi-indices, and
$i$ be an index. We denote by $IJ$ (resp.{} $Ii$) the multi-index $(i_{1}%
,\ldots,i_{k},j_{1},\ldots,j_{h})$ (resp.{} $(i_{1},\ldots,i_{k},i)$). We
write $\partial^{|I|}/\partial x^{I}$ for $\partial/\partial x^{i_{1}}%
\circ\cdots\circ\partial/\partial x^{i_{k}}$.

Let $\xi:P\longrightarrow M$ be a fiber bundle. For $k\leq\infty$, we denote
by $\xi_{k}:J^{k}\xi\longrightarrow M$ the bundle of $k$-jets of local
sections of $\xi$. For any (local) section $s:M\longrightarrow P$ of $\pi$, we
denote by $j_{k}s:M\longrightarrow J^{k}\xi$ its $k$th jet prolongation. Let
$\ldots,x^{i},\ldots$ be coordinates on $M$ and $\ldots,x^{i},\ldots
,y^{a},\ldots$ bundle coordinates on $E$. We denote by $\ldots,x^{i}%
,\ldots,y_{I}^{a},\ldots$ the associated jet coordinates on $J^{k}\xi$,
$|I|{}\leq k$. For $0\leq h\leq k\leq\infty$, we denote by $\xi_{k,h}:J^{k}%
\xi\longrightarrow J^{h}\xi$ the canonical projection. We will always
understand the monomorphisms $\pi_{k,h}^{\ast}:\Lambda(J^{h}\xi
)\longrightarrow\Lambda(J^{k}\xi)$. For all $k\geq0$, $\xi_{k+1,k}:J^{k+1}%
\xi\longrightarrow J^{k}\xi$ is an affine subbundle of $(\pi_{k})_{1,0}%
:J^{1}\pi_{k}\longrightarrow J^{k}\pi$ and the inclusion $i:J^{k+1}\pi\subset
J^{1}\pi_{k}$ is locally defined by $i^{\ast}(u_{I}^{\alpha})_{i}%
=u_{Ii}^{\alpha}$, $|I|{}\leq k$.

Let $\xi$ be as above. We denote by $\Lambda_{1}(P,\xi)=\bigoplus_{k}%
\Lambda_{1}^{k}(P,\xi)\subset\Lambda(P)$ the differential (graded) ideal in
$\Lambda(P)$ made of differential forms on $P$ vanishing when pulled-back to
fibers of $\xi$, by $\Lambda_{q}(P,\xi)=\bigoplus_{k}\Lambda_{q}^{k}(P,\xi)$
its $q$th exterior power, $q\geq0$, and by $V\!\Lambda(P,\xi)=\bigoplus
_{k}V\!\Lambda^{k}(P,\xi)$ the quotient differential algebra $\Lambda
(P)/\Lambda_{1}(P,\xi)$, $d^{V}:V\!\Lambda(P,\xi)\longrightarrow
V\!\Lambda(P,\xi)$ being its (quotient) differential. By abusing the notation,
we also denote by $d^{V}$ the (quotient) differential in $\Lambda_{q}%
(P,\xi)/\Lambda_{q+1}(P,\xi)\simeq V\!\Lambda(P,\xi)\otimes\Lambda_{q}%
^{q}(P,\xi)$. We denote by $J^{\dag}\xi\longrightarrow P$ the \emph{reduced
multimomentum bundle} of $\xi$ (see, for instance, \cite{r05}). It is the
vector bundle over $P$ whose module of sections is $V\!\Lambda^{1}%
(P,\xi)\otimes\Lambda_{n-1}^{n-1}(P,\xi)$. Equivalently, it may be defined as
the bundle of affine morphisms from $J^{1}\xi$ to $\Lambda^{n}T^{\ast}M$.

A connection $\nabla$ in $\xi$ is a section of the first jet bundle $\xi
_{1,0}:J^{1}\xi\longrightarrow P$. We will also interpret $\nabla$ as an
element in $\Lambda^{1}(P)\otimes V\!D(P,\xi)$, where $V\!D(P,\xi)$ is the
module of $\xi$-vertical vector fields on $P$. Put $\ldots,\nabla_{i}%
^{a}:=\nabla^{\ast}(y_{i}^{a}),\ldots$, where $\ldots,y_{i}^{a},\ldots$ are
jet coordinates in $J^{1}\xi$. Then, locally
\[
\nabla=(dy^{a}-\nabla_{i}^{a}dx^{i})\otimes\dfrac{\partial}{\partial y^{a}}.
\]
Recall that a (local) section $\sigma:M\longrightarrow P$ is $\nabla$-constant
for a connection $\nabla$ iff, by definition, $\nabla\circ\sigma=j_{1}\sigma$.
A connection $\nabla$ in $P$ determines splittings of the exact sequence
\begin{equation}
0\longrightarrow V\!D(P,\xi)\longrightarrow D(P)\longrightarrow\xi^{\circ
}(D(M))\longrightarrow0, \label{ExSeq}%
\end{equation}
and its dual%
\begin{equation}
0\longleftarrow V\!\Lambda^{1}(P,\xi)\longleftarrow\Lambda^{1}%
(P)\longleftarrow\Lambda_{1}^{1}(P,\xi)\longleftarrow0. \label{ExSeqDual}%
\end{equation}
Thus, using $\nabla$ one can lift a vector field $X$ on $M$ to a vector field
$X^{\nabla}$ on $P$ transversal to fibers of $\xi$. Moreover, $\nabla$ determines an
isomorphism
\[
\Lambda(P)\simeq\bigoplus_{p,q}V\!\Lambda^{p}(P,\xi)\otimes\Lambda_{q}%
^{q}(P,\xi),
\]
and, in particular, for any $p,q$, a projection
\[
i^{p,q}(\nabla):\Lambda^{p+q}(P)\longrightarrow V\!\Lambda^{p}(P,\xi
)\otimes\Lambda_{q}^{q}(P,\xi),
\]
and an embedding
\[
e^{p,q}(\nabla):V\!\Lambda^{p}(P,\xi)\otimes\Lambda_{q}^{q}(P,\xi
)\longrightarrow\Lambda^{p+q}(P)
\]
taking its values in $\Lambda_{q}^{p+q}(P,\xi)$. Notice that the
\textquotedblleft insertions\textquotedblright\ $i^{p,q}(\nabla)$ are actually
pointwise and, therefore, can be restricted to maps. Namely, if
$F:P_{1}\longrightarrow P$ is a smooth map, and $\Delta$ a section of the
pull-back $F^{\circ}(\xi_{1,0}):F^{\circ}(J^{1}\xi)\longrightarrow P_{1}$,
then the element
\[
i^{p,q}(\Delta)F^{\circ}(\omega)\in F^{\circ}(V\!\Lambda^{p}(P,\xi)\otimes
_{A}\Lambda_{q}^{q}(P,\xi))
\]
is well-defined for every $\omega\in\Lambda^{p+q}(P)$.

Every connection $\nabla$ defines a vector-valued differential $2$-form $R^{\nabla}\in\Lambda_{2}^{2}(P,\xi)\otimes V\!D(P,\xi)$, called the curvature, via
\[
R^{\nabla}(X,Y):=[X^{\nabla},Y^{\nabla}]-[X,Y]^{\nabla},\quad X,Y\in D(M)
\]
Locally,
\[
R^{\nabla}=R_{ij}^{a}dx^{i}\wedge dx^{j}\otimes\frac{\partial}{\partial y^{a}%
},\quad R_{ij}^{a}=\tfrac{1}{2}(D_{i}\nabla_{j}^{a}-D_{j}\nabla_{i}^{a}%
)\circ\nabla.
\]
where $D_{i}:=\partial/\partial x^{i}+y_{i}^{a}\partial/\partial y^{a}$,
$i=1,\ldots,n$. A connection $\nabla$ is flat iff, by definition, $R^{\nabla
}=0$. If $\nabla$ is a flat connection in $\xi$, then $P$ is locally foliated
by (local) $\nabla$-constant sections of $\xi$.

\begin{example}
Let $\xi:P\longrightarrow M$ be as above and $\sigma:M\longrightarrow P$ a
(local) section of $\xi$. It is sometimes useful to understand $j_{1}%
\sigma:M\longrightarrow J^{1}\xi$ as a section of the pull-back bundle
$\xi_{1,0}|_{\sigma}:J^{1}\xi|_{\sigma}\longrightarrow M$. For instance, if
$\omega\in\Lambda_{n-1}^{n+1}(P,\xi)$ is a PD Hamiltonian system in $\xi$ in
the sense of \cite{v09b}, the \emph{PD Hamilton equations} for $\sigma$ read
\[
i^{1,n}(j_{1}\sigma)\omega|_{\sigma}=0.
\]

\end{example}

Let $\nabla$ be a connection in $\xi$. If $\nabla$ is flat, the de Rham
complex of $P$, $(\Lambda(P),d)$, splits into a bicomplex
\begin{equation}
(V\!\Lambda(P,\xi)\otimes\Lambda_{\bullet}^{\bullet}(P,\xi),\overline{d}%
_{\nabla},d^{V}), \label{25}%
\end{equation}
where
\[
\overline{d}_{\nabla}(\omega\otimes\sigma):=(i^{p,q+1}(\nabla)\circ d\circ
e^{p,q}(\nabla))(\omega\otimes\sigma),
\]
$\omega\in V\!\Lambda^{p}(P,\xi)$ and $\sigma\in\Lambda_{q}^{q}(P,\xi)$,
$p,q\geq0$. For every fixed $p$, the complex $(V\!\Lambda^{p}(P,\xi
)\otimes\Lambda_{\bullet}^{\bullet}(P,\xi),\overline{d}_{\nabla})$ is locally
acyclic in positive degree.

\section{Geometry of PDEs and Calculus of Variations\label{SecGeomPDE}}

In this section we recall basic facts about the geometric theory of partial
differential equations (PDEs). For more details see \cite{b...99}.

Let $\pi:E\longrightarrow M$ be a fiber bundle, $\dim M=n$, $dim E = n+m$ and $\ldots
,u^{\alpha},\ldots$ fiber coordinates in $E$. Recall that, for all
$k\leq\infty$, $J^{k}\pi$ is endowed with the \emph{Cartan distribution}
\[
\mathscr{C}_{k}:J^{k}\pi\ni\theta\longmapsto\mathscr{C}_{k}(\theta)\subset
T_{\theta}J^{k}\pi,
\]
where $\mathscr{C}_{k}(\theta)$ is defined as follows. Suppose that $\theta
=(j_{k}s)(x)$ for some $x\in M$ and $s$ a local section of $\pi$ around $x$.
The image of $d_{x}j_{k}s:T_{x}M\longrightarrow T_{\theta}J^{k}\pi$ is said to
be an $R$\emph{-plane} at $\theta$. Put
\[
\mathscr{C}_{k}(\theta):=\operatorname{span}\{R\text{-planes at }\theta\}.
\]
Now, let $k<\infty$. $\mathscr{C}_{k}$ is locally spanned by vector fields
\[
\ldots,\dfrac{\partial}{\partial x^{i}}+\sum\nolimits_{|I|{}<k}u_{Ii}^{\alpha
}\dfrac{\partial}{\partial u_{I}^{\alpha}},\ldots,\dfrac{\partial}{\partial
u_{J}^{\alpha}},\ldots,\quad|J|{}=k.
\]

Local infinitesimal symmetries of $\mathscr{C}_{k}$ are called \emph{Lie
fields}. Every Lie field $Z\in D(J^{k}\pi)$ can be uniquely lifted to a Lie
field $Z_{r}\in D(J^{k+r}\pi)$. Moreover, according to the Lie-B\"{a}cklund Theorem, every Lie field $Z\in D(J^{k}\pi)$
is the lift of

\begin{enumerate}
\item a vector field $Y\in D(E)$ if $m>1$,

\item a Lie field $Y^{\prime}\in D(J^{1}\pi)$ if $m=1$.
\end{enumerate}

If $Y\in D(E)$ is locally given by $Y=X^{i}\partial/\partial x^{i}+Y^{\alpha
}\partial/\partial u^{\alpha}$, then the Lie field $Y_{r}\in D(J^{r}\pi)$ is
locally given by
\begin{equation}
Y_{r}=X^{i}\left(  \dfrac{\partial}{\partial x^{i}}+\sum\nolimits_{|I|{}%
<r}u_{Ii}^{\alpha}\dfrac{\partial}{\partial u_{I}^{\alpha}}\right)
+\sum\nolimits_{|I|{}<r}D_{I}(Y^{\alpha}-u_{i}^{\alpha}X^{i})\dfrac{\partial
}{\partial u_{I}^{\alpha}}, \label{21}%
\end{equation}
where $D_{j_{1}\cdots j_{s}}:=D_{j_{1}}\circ\cdots\circ D_{j_{s}}$, and
$D_{j}:=\partial/\partial x^{j}+u_{Ij}^{\alpha}\partial/\partial u_{I}%
^{\alpha}$ is the $j$\emph{th total derivative, }$j,j_{1},\ldots
,j_{s}=1,\ldots,n$.

\begin{remark}
\label{Remark3}The local expression (\ref{21}) shows, in particular, that, for
any $\xi\in T_{\theta}J^{r}\pi$, there exists $Y\in D(E)$ such that
$\xi=(Y_{r})_{\theta}$.
\end{remark}

A $k$th-order (system of $\ell$) PDE(s) on sections of $\pi$ is an $\ell
$-codimensional closed submanifold $\mathscr{E}_{0}\subset J^{k}\pi$. For
$0\leq r\leq\infty$ one can define the $r$\emph{th prolongation of }$E$ as
\begin{align*}
\mathscr{E}_{r}  &  :=\{j_{k+r}(s)(x)\in J^{k+r}\pi:\operatorname{im}%
j_{k}(s)\text{ is tangent to }\mathscr{E}_{0}\\
&  \text{at }j_{k}(s)(x)\text{ up to the order }r\text{, }x\in M\}\subset
J^{k+r}\pi.
\end{align*}
If $\mathscr{E}$ is locally given by
\[
\Phi^{a}(\ldots,x^{i},\ldots,u_{I}^{\alpha},\ldots)=0,\quad a=1,\ldots
,\ell,\quad|I|{}\leq k,
\]
then $\mathscr{E}_{r}$ is locally given by
\begin{equation}
(D_{J}\Phi^{a})(\ldots,x^{i},\ldots,u_{I}^{\alpha},\ldots)=0,\quad|J|{}\leq r,
\label{Prolong}%
\end{equation}
In the following we will always assume that

\begin{enumerate}
\item $\mathscr{E}_{r}\longrightarrow M$ is a smooth (closed) subbundle of
$\pi_{k+r}$, $r\leq\infty$,

\item the (possibly non-surjective) maps $\pi_{k+r+1,k+r}:\mathscr{E}_{r+1}%
\longrightarrow\mathscr{E}_{r}$, $r<\infty$, have constant rank.
\end{enumerate}

A local section $s$ of $\pi$ is a \emph{(local) solution of }$\mathscr{E}_{0}$
iff, by definition, $\operatorname{im}j_{k}s\subset\mathscr{E}_{0}$ or, which
is the same, $\operatorname{im}j_{k+r}s\subset\mathscr{E}_{r}$ for some
$r\leq\infty$. Lie fields in $D(J^{k}\pi)$ preserving $\mathscr{E}_{0}$ are
called \emph{Lie symmetries} of $\mathscr{E}_{0}$. The flow of a Lie symmetry
maps (images of $k$th prolongations of) solutions to (images of $k$th
prolongations of) solutions.

The Cartan distribution $\mathscr{C}:=\mathscr{C}_{\infty}\subset TJ^{\infty
}\pi$ is locally spanned by total derivatives, $D_{i}$, $i=1,\ldots,n$, and
restricts to any submanifold $\mathscr{E}\subset J^{\infty}\pi$ of the form
$\mathscr{E=E}_{\infty}$ for some PDE $\mathscr{E}_{0}$. Denote again by
$\mathscr{C}$ the restricted distribution. It is a flat connection in
$\mathscr{E}\longrightarrow M$ sometimes called the \emph{Cartan connection}.
$\mathscr{C}$-constant sections are of the form $j_{\infty}s$, with $s$ a
(local) solution of $\mathscr{E}_{0}$ and vice versa. Therefore, we can
identify the space of $\mathscr{C}$-constant sections and the space of solutions
of $\mathscr{E}_{0}$. The pair $(\mathscr{E},\mathscr{C})$ is called a(n
elementary) \emph{diffiety} \cite{v01,v98} and contains all the relevant
information about the original PDE $\mathscr{E}_{0}$. We will often identify
$(\mathscr{E},\mathscr{C})$ and $\mathscr{E}_{0}$.

\begin{remark}
\label{Remark4}A diffiety $(\mathscr{E},\mathscr{C})$ can be generically
embedded in many ways in an infinite jet space. Informally speaking, any such
embedding corresponds to a choice of dependent variables in the original
equation. Properties of a diffiety that do not depend on its embedding in an
infinite jet space are referred to as \emph{intrinsic}.
\end{remark}

In the following we will indicate $\mathscr{C}^{p}\Lambda^{p}:=V\!\Lambda
^{p}(J^{\infty}\pi,\pi_{\infty})$ and $\overline{\Lambda}{}^{q}:=\Lambda
_{q}^{q}(J^{\infty}\pi,\pi_{\infty})$. We also put $\mathscr{C}^{\bullet
}\Lambda:=\bigoplus_{p}\mathscr{C}^{p}\Lambda^{p}$ and $\overline{\Lambda
}:=\bigoplus_{q}\overline{\Lambda}{}^{q}$. The Cartan connection endows the de
Rham complex $(\Lambda(J^{\infty}\pi),d)$ of $J^{\infty}\pi$ with a bicomplex
structure $(\mathscr{C}^{\bullet}\Lambda\otimes\overline{\Lambda}{}%
,\overline{d},d^{V})$, where $\overline{d}:=\overline{d}_{\mathscr{C}}$,
called the \emph{variational bicomplex}${}$. The variational bicomplex allows
a cohomological formulation of the calculus of variations
\cite{b...99,v84,a92} (see below). Similarly, the de Rham complex
$(\Lambda(\mathscr{E}),d)$ of a diffiety $\mathscr{E}$ is naturally endowed
with a bicomplex structure denoted by $(\mathscr{C}^{\bullet}\Lambda
(\mathscr{E})\otimes\overline{\Lambda}{}(\mathscr{E}),\overline{d},d^{V})$.

In the following we will understand the isomorphism $\Lambda(J^{\infty}\pi%
)\simeq\mathscr{C}^{\bullet}\Lambda\otimes\overline{\Lambda}{}$. The complex
\[%
%TCIMACRO{\TeXButton{xymatrix}{\xymatrix{0 \ar[r] &  C^\infty(J^\infty
%) \ar[r]^-{\overline{d}} & \overline{\Lambda}{}^1 \ar[r]^-{\overline{d}}
%& \cdots\ar[r] & \overline{\Lambda}{}^q \ar[r]^-{\overline{d}} & \overline
%{\Lambda}{}^{q+1} \ar[r]^-{\overline{d}} & \cdots}}}%
%BeginExpansion
\xymatrix{0 \ar[r] &  C^\infty(J^\infty) \ar[r]^-{\overline{d}} & \overline
{\Lambda}{}^1 \ar[r]^-{\overline{d}}
& \cdots\ar[r] & \overline{\Lambda}{}^q \ar[r]^-{\overline{d}} & \overline
{\Lambda}{}^{q+1} \ar[r]^-{\overline{d}} & \cdots}%
%EndExpansion
\]
is called the \emph{horizontal de Rham complex}. An element $\mathscr{L}\in
\overline{\Lambda}{}^{n}$ is naturally interpreted as a \emph{Lagrangian
density} and its cohomology class $[\mathscr{L}]\in$ $H^{n}(\overline{\Lambda
},\overline{d})$ as an \emph{action functional} on sections of $\pi$. The
associated Euler--Lagrange equations can then be obtained as follows.

Consider the complex
\begin{equation}%
%TCIMACRO{\TeXButton{xymatrix}{\xymatrix{0 \ar[r] & \mathscr{C}\Lambda
%^1 \ar[r]^-{\overline{d}} & \mathscr{C}\Lambda^1\otimes\overline{\Lambda}%
%{}^1 \ar[r]^-{\overline{d}} & \cdots\ar[r] & \mathscr{C}\Lambda^1\otimes
%\overline{\Lambda}{}^q \ar[r]^-{\overline{d}} & \cdots}}}%
%BeginExpansion
\xymatrix{0 \ar[r] & \mathscr{C}\Lambda^1 \ar[r]^-{\overline{d}} & \mathscr
{C}\Lambda^1\otimes\overline{\Lambda}{}^1 \ar[r]^-{\overline{d}} & \cdots
\ar[r] & \mathscr{C}\Lambda^1\otimes\overline{\Lambda}{}^q \ar[r]^-{\overline
{d}} & \cdots}%
%EndExpansion
, \label{ComplCL1}%
\end{equation}
and the $C^{\infty}(J^{\infty}\pi)$-submodule $\varkappa^{\dag}\subset
\mathscr{C}\Lambda^{1}\otimes\overline{\Lambda}{}^{n}$ generated by elements
in $\mathscr{C}\Lambda^{1}\otimes\overline{\Lambda}{}^{n}\cap\Lambda
^{n+1}(J^{1}\pi)$. $\varkappa^{\dag}$ is locally spanned by elements
$d^{V}\!u^{\alpha}\otimes d^{n}x$, $d^{n}x:=dx^{1}\wedge\cdots\wedge dx^{n}$.

\begin{theorem}
\label{Theorem0}\cite{v84} Complex (\ref{ComplCL1}) is acyclic in the $q$th
term, for $q\neq n$. Moreover, for any $\omega\in\mathscr{C}\Lambda^{1}%
\otimes\overline{\Lambda}{}^{n}$ there exists a unique element $\boldsymbol{E}%
_{\omega}\in\varkappa^{\dag}\subset\mathscr{C}\Lambda^{1}\otimes
\overline{\Lambda}{}^{q}$ such that $\boldsymbol{E}_{\omega}-\omega
=\overline{d}\vartheta$ for some $\vartheta\in\mathscr{C}\Lambda^{1}%
\otimes\overline{\Lambda}{}^{n-1}$ and the correspondence $H^{n}%
(\mathscr{C}\Lambda^{1}\otimes\overline{\Lambda},\overline{d})\ni\lbrack
\omega]\longmapsto\boldsymbol{E}_{\omega}\in\varkappa^{\dag}$ is a vector
space isomorphism. In particular, for $\omega=d^{V}\!\mathscr{L}$,
$\mathscr{L}\in\overline{\Lambda}{}^{n}$ being a Lagrangian density locally
given by $\mathscr{L}=Ld^{n}x$, $L$ a local function on $C^{\infty}(J^{\infty
}\pi)$, $\boldsymbol{E}(\mathscr{L}):=\boldsymbol{E}_{\omega}$ is locally given
by $\boldsymbol{E}(\mathscr{L})=\delta L/\delta u^{\alpha}d^{V}\!u^{\alpha
}\otimes d^{n}x$ where
\[
\frac{\delta L}{\delta u^{\alpha}}:=(-)^{|I|}D_{I}\frac{\partial L}{\partial
u_{I}^{\alpha}}%
\]
are the Euler--Lagrange derivatives of $L$.
\end{theorem}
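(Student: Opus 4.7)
The plan splits into (i) acyclicity for $q\neq n$, (ii) existence of a representative of $[\omega]$ in $\varkappa^{\dag}$ via horizontal integration by parts, and (iii) uniqueness, which yields the isomorphism, the globality of $\boldsymbol{E}_{\omega}$, and the Euler--Lagrange formula as a byproduct of (ii).

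For (i), note $\overline{\Lambda}^{q}=0$ whenever $q>n$ since $\dim M=n$, so the complex is trivial there. For $0\leq q<n$, apply the final statement of Section~\ref{SecNotConv} to the Cartan connection $\mathscr{C}$ and to the column of (\ref{25}) indexed by $p=1$: each column of the bicomplex attached to a flat connection is locally acyclic in positive horizontal degree.

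For (ii), in local jet coordinates expand $\omega=A^{I}_{\alpha}\,d^{V}\!u^{\alpha}_{I}\otimes d^{n}x$. Using $\overline{d}(d^{V}\!u^{\alpha}_{J})=-d^{V}\!u^{\alpha}_{Jk}\wedge dx^{k}$ together with $dx^{k}\wedge(\partial_{i}\rfloor d^{n}x)=\delta^{k}_{i}d^{n}x$, one obtains the horizontal integration-by-parts identity
\[
A\,d^{V}\!u^{\alpha}_{Ji}\otimes d^{n}x \;=\; -\overline{d}\bigl(A\,d^{V}\!u^{\alpha}_{J}\otimes(\partial_{i}\rfloor d^{n}x)\bigr)\;-\;D_{i}A\cdot d^{V}\!u^{\alpha}_{J}\otimes d^{n}x.
\]
Iterating this on $|I|$ strips off all positive-length multi-indices and delivers
\[
\boldsymbol{E}_{\omega}\;:=\;\Big(\sum_{I}(-1)^{|I|}D_{I}A^{I}_{\alpha}\Big)\,d^{V}\!u^{\alpha}\otimes d^{n}x\;\in\;\varkappa^{\dag},
\]
with $\omega-\boldsymbol{E}_{\omega}=\overline{d}\vartheta$ for some $\vartheta\in\mathscr{C}\Lambda^{1}\otimes\overline{\Lambda}^{n-1}$. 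For $\omega=d^{V}\!\mathscr{L}$ with $\mathscr{L}=L\,d^{n}x$ one has $A^{I}_{\alpha}=\partial L/\partial u^{\alpha}_{I}$, and the above formula reproduces the announced Euler--Lagrange expression.

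The real obstacle is (iii): one must show $\varkappa^{\dag}\cap\overline{d}(\mathscr{C}\Lambda^{1}\otimes\overline{\Lambda}^{n-1})=\{0\}$. Locally, write $\vartheta=B^{I,i}_{\alpha}\,d^{V}\!u^{\alpha}_{I}\otimes(\partial_{i}\rfloor d^{n}x)$ and expand $\overline{d}\vartheta$: only the contribution from $\overline{d}(d^{V}\!u^{\alpha}_{I})$ raises the Cartan-index length, so if $m:=\max|I|$ occurring in $\vartheta$ is positive, the coefficient of $d^{V}\!u^{\alpha}_{K}\otimes d^{n}x$ of length $|K|=m+1$ in $\overline{d}\vartheta$ is a non-trivial linear combination of the top-length $B^{I,i}_{\alpha}$. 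Demanding $\overline{d}\vartheta\in\varkappa^{\dag}$ forces all these to vanish, and downward induction on $m$ yields $\vartheta=0$. This uniqueness shows that $[\omega]\mapsto\boldsymbol{E}_{\omega}$ descends to a well-defined injection on $H^{n}$, surjective onto $\varkappa^{\dag}$ since every element of $\varkappa^{\dag}$ is its own representative. Uniqueness also guarantees that locally constructed representatives of one and the same $\omega$ agree on overlaps, so $\boldsymbol{E}_{\omega}$ is globally defined.
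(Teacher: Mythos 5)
The paper offers no proof of this theorem: it is quoted from Vinogradov \cite{v84}, so your argument has to stand on its own. Your part (ii) --- the horizontal integration by parts producing $\boldsymbol{E}_{\omega}$ and the Euler--Lagrange formula --- is essentially correct up to sign conventions. Part (i), however, rests on a misapplication: the final statement of Section \ref{SecNotConv} concerns finite-dimensional bundles, where $\overline{d}_{\nabla}$ is a de Rham differential along $M$ with parameters and the Poincar\'e lemma with parameters supplies the homotopy. For the Cartan connection on $J^{\infty}\pi$ the differential $\overline{d}$ involves total derivatives, which raise the jet order, and that homotopy does not exist; indeed, if the Section \ref{SecNotConv} statement applied verbatim it would also give local acyclicity at $q=n$, contradicting the nontriviality of $\varkappa^{\dag}$. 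The vanishing for $0<q<n$ (the ``one-line theorem'') needs its own argument, and the theorem as stated is a global claim, which local acyclicity alone would not deliver.

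The more serious gap is in (iii). The coefficient of $d^{V}\!u_{K}^{\alpha}\otimes d^{n}x$ with $|K|=m+1$ in $\overline{d}\vartheta$ is the symmetrized sum $\sum_{Ji=K}B_{\alpha}^{J,i}$, and its vanishing does \emph{not} force the individual $B_{\alpha}^{J,i}$ to vanish: for $n\geq2$ the antisymmetric parts survive, so your downward induction does not start. The conclusion $\vartheta=0$ is in any case false --- any nonzero $\overline{d}$-closed $\vartheta$ (and by acyclicity at $q=n-1$ there are many, all $\overline{d}$-exact) satisfies $\overline{d}\vartheta=0\in\varkappa^{\dag}$. What must actually be shown is $\varkappa^{\dag}\cap\operatorname{im}\overline{d}=0$, i.e., that a source form which is a total divergence vanishes. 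Two standard routes: (a) if $f_{\alpha}\,d^{V}\!u^{\alpha}\otimes d^{n}x=\overline{d}\lambda$, then for every section $s$ and every compactly supported vertical variation $\chi$ one has $\int_{M}(f_{\alpha}\chi^{\alpha})\circ j_{\infty}s\;d^{n}x=0$ by Stokes, whence $f_{\alpha}=0$; or (b) verify the classical identity $\delta(D_{i}B^{i})/\delta u^{\alpha}=0$ and observe that your operator $\boldsymbol{E}$ is the identity on $\varkappa^{\dag}$, so $\eta=\boldsymbol{E}_{\eta}=0$ for every exact $\eta\in\varkappa^{\dag}$. Either would close the argument; the induction as written would not.
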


In view of the above theorem, $\boldsymbol{E}(\mathscr{L})$ does not depend on
the choice of $\mathscr{L}$ in a cohomology class $[\mathscr{L}]\in$
$H^{n}(\overline{\Lambda},\overline{d})$ and it is naturally interpreted as
the left hand side of the Euler--Lagrange (EL) equations determined by
$\mathscr{L}$. Any $\vartheta\in\mathscr{C}\Lambda^{1}\otimes\overline
{\Lambda}{}^{n-1}$ such that
\begin{equation}
\boldsymbol{E}(\mathscr{L})-d^{V}\!\mathscr{L}=\overline{d}\vartheta
\label{Legendre}%
\end{equation}
will be called a \emph{Legendre form} \cite{av04}. Eq.{} (\ref{Legendre})
may be interpreted as the \emph{first variation formula} for the Lagrangian
density $\mathscr{L}$. A local Legendre form is given by
\[
\vartheta_{\mathrm{loc}}=(-)^{|J|}\binom{IJ}{J}D_{J}\dfrac{\partial
L}{\partial u_{IJi}^{\alpha}}d^{V}\!u_{I}^{\alpha}\otimes d^{n-1}x_{i},
\]
where $\binom{IJ}{J}$ is the multinomial coefficient, and $d^{n-1}x_i := i_{D_i} d^n x$.

\begin{remark}
\label{Remark2}Notice that, if $\vartheta\in\mathscr{C}\Lambda^{1}%
\otimes\overline{\Lambda}{}^{n-1}$ is a Legendre form for a Lagrangian density
$\mathscr{L}\in\overline{\Lambda}{}^{n}$, then $\vartheta+d^{V}\!\varrho$ is a
Legendre form for the cohomologous Lagrangian density $\mathscr{L}+\overline
{d}\varrho$, $\varrho\in\overline{\Lambda}{}^{n-1}$. Moreover, any two
Legendre forms $\vartheta,\vartheta^{\prime}$ for the same Lagrangian density
differ by a $\overline{d}$-closed, and, therefore, $\overline{d}$-exact form,
i.e., $\vartheta-\vartheta^{\prime}=\overline{d}\lambda$, for some $\lambda
\in\mathscr{C}\Lambda^{1}\otimes\overline{\Lambda}{}^{n-2}$. Finally, for
$\mathscr{L}\in\Lambda(J^{k+1}\pi)$ one can always find a Legendre form
$\vartheta$ containing vertical derivatives of functions in $C^{\infty}%
(J^{k}\pi)$ only.
\end{remark}

\begin{remark}
\label{Remark1}Complex (\ref{ComplCL1}) restricts to any diffiety
$\mathscr{E}$ in the sense that there is a (unique) complex
\begin{equation}
\xymatrix{0 \ar[r] & \mathscr{C}\Lambda^1|_\mathscr{E} \ar[r]^-{\overline{d}|_\mathscr{E}} & \mathscr{C}\Lambda^1\otimes\overline{\Lambda}{}^1|_\mathscr{E} \ar[r]^-{\overline{d}|_\mathscr{E}} & \cdots \ar[r] & \mathscr{C}\Lambda^1\otimes\overline{\Lambda}{}^q|_\mathscr{E} \ar[r]^-{\overline{d}|_\mathscr{E}} & \cdots},
\label{ComplCL1j}%
\end{equation}
such that the restriction map $\mathscr{C}\Lambda^{1}\otimes\overline{\Lambda
}{}\longrightarrow\mathscr{C}\Lambda^{1}\otimes\overline{\Lambda}%
{}|_{\mathscr{E}}$ is a morphism of complexes. Moreover, complex
(\ref{ComplCL1j}) is acyclic in the $q$th term and the correspondence defined
by $H^{n}(\mathscr{C}\Lambda^{1}\otimes\overline{\Lambda}{}^{n}|_{\mathscr{E}}%
,\overline{d}|_{\mathscr{E}})\ni\lbrack\omega|_{\mathscr{E}}]\longmapsto
\boldsymbol{E}_{\omega}|_{\mathscr{E}}\in\varkappa^{\dag}|_{\mathscr{E}}$,
$\omega\in\mathscr{C}\Lambda^{1}\otimes\overline{\Lambda}{}^{n}$, is a vector
space isomorphism.
\end{remark}

\section{Hamilton-Jacobi Diffieties\label{SecHJDiff}}

In the following we simply write $J^{k}$ for $J^{k}\pi$, $k\leq\infty$.

\begin{definition}
\label{Def2}Let $\mathscr{E}$ be a diffiety. A finite dimensional diffiety
$\mathscr{Y}$ will be called an \emph{Hamilton-Jacobi (HJ) diffiety}. If\emph{
}$\mathscr{Y}\subset\mathscr{E}$, then $\mathscr{Y}$ will be called an \emph{HJ
subdiffiety} of $\mathscr{E}$.
\end{definition}

Motivations for this definition can be found in the introduction and in
Section \ref{SecHJEL} (see also Example \ref{ExEL}). From an intrinsic point
of view an HJ diffiety is nothing but a (finite dimensional) manifold with an
involutive distribution, or, which is the same, a foliation. As recalled in
the introduction, the equation for the leaves of the foliation is a compatible
equation of the form (\ref{24}).

A diffiety $\mathscr{E}$ is often presented together with an embedding
$\mathscr{E}\subset J^{\infty}$. In this case, it is useful to restrict the
attention to a special class of HJ subdiffieties of $\mathscr{E}$ that we
define below.

A connection $\nabla:J^{k}\longrightarrow J^{1}\pi_{k}$ in $\pi_{k}%
:J^{k}\longrightarrow M$ will be said to be \emph{holonomic} if it takes its values
in $J^{k+1}\subset J^{1}\pi_{k}$. $\nabla$ is holonomic iff locally
\[%
\begin{array}
[c]{ll}%
(\nabla_{I}^{\alpha}{})_{i}=u_{Ii}^{\alpha}, & \text{if }|I|{}<k\\
(\nabla_{I}^{\alpha}{})_{i}=(\nabla_{J}^{\alpha}{})_{j}, & \text{if }%
|I|{}=|J|{}=k\text{ and }Ii=Jj
\end{array}
.
\]
Let $\nabla$ be an holonomic connection in $\pi_{k}$. For $|I|{}=k$, put
$\nabla_{Ii}^{\alpha}:=(\nabla_{I}^{\alpha}{})_{i}$. Since $\nabla$ is
holonomic, the $\nabla_{Ii}^{\alpha}$'s are well-defined. Moreover, local $\nabla$-constant sections are of the form $j_{k}s:M\longrightarrow J^{k}$, for some
local section $s$ of $\pi$. Notice that $\nabla$ is flat iff, locally,
\begin{equation}
\lbrack\nabla_{i},\nabla_{j}]=0, \label{9}%
\end{equation}
where%
\[
\nabla_{i}=\frac{\partial}{\partial x^{i}}+\sum\nolimits_{|I|{}<k}%
u_{Ii}^{\alpha}\frac{\partial}{\partial u_{I}^{\alpha}}+\sum\nolimits_{|I|{}%
=k}\nabla_{Ii}^{\alpha}\frac{\partial}{\partial u_{I}^{\alpha}}.
\]
Eq.{} (\ref{9}) can be rewritten as
\[
\nabla_{i}\nabla_{jI}^{\alpha}-\nabla_{j}\nabla_{iI}^{\alpha}=\nabla^{\ast
}(D_{i}\nabla_{jI}^{\alpha}-D_{j}\nabla_{iI}^{\alpha})=0,\quad|I|{}=k,
\]
For $f\in C^{\infty}(J^{k})$ and $I=i_{1}\cdots i_{s}$, put
\begin{equation}
\nabla_{I}f:=\nabla_{i_{1}}\cdots\nabla_{i_{s}}f\in C^{\infty}(J^{k}).
\label{1}%
\end{equation}
Definition (\ref{1}) is a good one since $\nabla$ is flat. In particular
$\nabla_{I}u^{\alpha}=u_{I}^{\alpha}$, for $|I|{}<k$.

Now, let $\nabla$ be a flat holonomic connection in $\pi_{k}$. Then for any
$\theta\in J^{k}$ there is a locally unique (local section) $s_{\theta}$ of
$\pi$ such that 1) $\theta=[s_{\theta}]_{x}^{k}$, $x=\pi_{k}(\theta)$ and 2)
$j_{k}s$ is a $\nabla$-constant section of $\pi_{k}$. Define a map
$\nabla_{\lbrack r]}:J^{k}\longrightarrow J^{k+r}$, $0\leq r\leq\infty$, by
putting
\[
\nabla_{\lbrack r]}(\theta):=[s_{\theta}]_{x}^{k+r},\quad x=\pi_{k}(\theta).
\]

\begin{proposition}
$\nabla_{\lbrack r]}$ is a smooth (closed) embedding locally given by
\[
\nabla_{\lbrack r]}^{\ast}(u_{I}^{\alpha})=\nabla_{I}u^{\alpha},\quad|I|{}\leq
k+r.
\]

\end{proposition}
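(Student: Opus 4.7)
The plan is to establish the local coordinate formula first; smoothness and the closed embedding property will then follow formally. Well-definedness of $\nabla_{[r]}$ is essentially explained in the excerpt: since $\nabla$ is flat and holonomic, the Frobenius theorem produces through every $\theta\in J^{k}$ a locally unique $\nabla$-constant section of $\pi_{k}$, and holonomicity forces this section to be of the form $j_{k}s_{\theta}$ for a unique local section $s_{\theta}$ of $\pi$ satisfying $[s_{\theta}]_{x}^{k}=\theta$.

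The heart of the argument is the identity
\[
\frac{\partial^{|I|}(\sigma^{\ast}f)}{\partial x^{I}}=\sigma^{\ast}(\nabla_{I}f),\qquad f\in C^{\infty}(J^{k}),
\]
valid for every $\nabla$-constant section $\sigma:M\longrightarrow J^{k}$ and every multi-index $I$. I would prove it by induction on $|I|$: the base case $|I|=1$ unpacks the definition $j_{1}\sigma=\nabla\circ\sigma$ on the jet coordinates of $J^{1}\pi_{k}$ and combines it with the chain rule applied to $f$; the inductive step applies the base case to the function $\nabla_{J}f$. The order-independence of $\nabla_{I}f$ required for consistency is exactly the flatness condition (\ref{9}) invoked in the excerpt to make (\ref{1}) well-posed. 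Specializing to $\sigma=j_{k}s_{\theta}$, $f=u^{\alpha}$, and evaluating at $x=\pi_{k}(\theta)$ gives
\[
u_{I}^{\alpha}(\nabla_{[r]}(\theta))=\frac{\partial^{|I|}s_{\theta}^{\alpha}}{\partial x^{I}}(x)=(\nabla_{I}u^{\alpha})(\theta),
\]
which is the asserted local expression.

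From the formula, $\nabla_{[r]}$ is smooth because the $\nabla_{I}u^{\alpha}$ are smooth functions on $J^{k}$. Since $\nabla_{I}u^{\alpha}=u_{I}^{\alpha}$ for $|I|\leq k$, one has $\pi_{k+r,k}\circ\nabla_{[r]}=\mathrm{id}_{J^{k}}$, so $\nabla_{[r]}$ is a smooth section of the submersion $\pi_{k+r,k}:J^{k+r}\longrightarrow J^{k}$; any such section is automatically an injective immersion with closed image, and therefore a closed embedding. The main obstacle I anticipate is the inductive identity: while formally routine, it must be argued carefully since at each step one needs $\sigma$ to be $\nabla$-constant in order to exchange $\partial/\partial x^{j}$ with the connection derivative $\nabla_{j}$, and flatness is required to guarantee that the right-hand side does not depend on the order of the indices of $I$.
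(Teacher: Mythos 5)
Your argument is correct and follows essentially the same route as the paper: the paper likewise observes that $\nabla_{[r]}$ is a section of $\pi_{k+r,k}$ and proves the coordinate formula by induction on the length of the multi-index, the inductive step being exactly your identity $\partial(\sigma^{\ast}f)/\partial x^{i}=\sigma^{\ast}(\nabla_{i}f)$ for $\nabla$-constant $\sigma$ (written there as the chain $\partial^{|J|+1}s_{\theta}/\partial x^{J}\partial x^{i}=\nabla^{\ast}(D_{i}\nabla_{J}u^{\alpha})=\nabla_{Ji}u^{\alpha}$). Your packaging of the computation as a general lemma for arbitrary $f\in C^{\infty}(J^{k})$, and your explicit remarks on flatness and on why a section of a submersion is a closed embedding, only make explicit what the paper leaves implicit.
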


\begin{proof}
$\nabla_{\lbrack r]}$ is clearly a section of the projection $\pi_{k+r,k}$.
Now, suppose that $\nabla_{\lbrack r]}^{\ast}(u_{J}^{\alpha})=\nabla
_{J}u^{\alpha}$ for some $|J|{}>k$, $|J|{}<k+r$, and let $\theta\in J^{k}$.
Put $x=\pi_{k}(\theta)$.Then
\begin{align*}
\nabla_{\lbrack r]}^{\ast}(u_{Ji}^{\alpha})(\theta)  &  =u_{Ji}^{\alpha
}([s_{\theta}]_{x}^{k+r})\\
&  =\frac{\partial^{|J|{}+1}s_{\theta}}{\partial x^{J}\partial x^{i}}(x)\\
&  =\left.  \frac{\partial}{\partial x^{i}}\right\vert _{x}\frac
{\partial^{|J|{}}s_{\theta}}{\partial x^{J}}\\
&  =\left.  \frac{\partial}{\partial x^{i}}\right\vert _{x}\nabla_{\lbrack
r]}^{\ast}(u_{J}^{\alpha})\circ j_{|J|}s_{\theta}\\
&  =(D_{i}\nabla_{J}u^{\alpha}\circ j_{|J|+1}s_{\theta})(x)\\
&  =(D_{i}\nabla_{J}u^{\alpha}\circ j_{k+1}s_{\theta})(x)\\
&  =(D_{i}\nabla_{J}u^{\alpha}\circ\nabla)(\theta)\\
&  =\nabla^{\ast}(D_{i}\nabla_{J}u^{\alpha})(\theta)\\
&  =(\nabla_{Ji}u^{\alpha})(\theta).
\end{align*}
By induction on $|J|$ the proposition follows.
\end{proof}

\begin{corollary}
The vector fields $\nabla_{i}$ and $D_{i}\in D(J^{\infty}\pi)$ are
$\nabla_{\lbrack\infty]}^{\ast}$-related, i.e., $\nabla_{\lbrack\infty]}%
^{\ast}\circ D_{i}=\nabla_{i}\circ\nabla_{\lbrack\infty]}^{\ast}$.
\end{corollary}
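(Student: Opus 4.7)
The strategy is to reduce the identity to a check on a set of generators of $C^{\infty}(J^{\infty})$. Both sides of $\nabla_{[\infty]}^{\ast}\circ D_{i}=\nabla_{i}\circ\nabla_{[\infty]}^{\ast}$ are $\mathbb{R}$-linear maps $C^{\infty}(J^{\infty})\longrightarrow C^{\infty}(J^{k})$, and each is a derivation along the ring homomorphism $\nabla_{[\infty]}^{\ast}$: on the left, because $D_{i}$ is a derivation and pullback is multiplicative; on the right, because $\nabla_{i}$ is a derivation on $J^{k}$. Two such derivations that agree on a set of generators agree everywhere, so it suffices to check the equality on the coordinate functions $x^{j}$ and $u_{I}^{\alpha}$, which generate $C^{\infty}(J^{\infty})$ locally.

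The check on $x^{j}$ is immediate: both $D_{i}(x^{j})$ and $\nabla_{i}(x^{j})$ equal $\delta_{i}^{j}$, and $\nabla_{[\infty]}^{\ast}$ fixes base coordinates since $\nabla_{[\infty]}$ is a section of $\pi_{k+\infty,k}$. For the jet coordinates, I would apply the Proposition just proved, which gives $\nabla_{[\infty]}^{\ast}(u_{I}^{\alpha})=\nabla_{I}u^{\alpha}$ and $\nabla_{[\infty]}^{\ast}(u_{Ii}^{\alpha})=\nabla_{Ii}u^{\alpha}$. Then
\[
\nabla_{[\infty]}^{\ast}(D_{i}u_{I}^{\alpha})=\nabla_{[\infty]}^{\ast}(u_{Ii}^{\alpha})=\nabla_{Ii}u^{\alpha}=\nabla_{i}(\nabla_{I}u^{\alpha})=\nabla_{i}(\nabla_{[\infty]}^{\ast}(u_{I}^{\alpha})),
\]
where the third equality uses the definition (\ref{1}) of $\nabla_{I}$ together with the fact that flatness of $\nabla$ makes this definition independent of the ordering of the entries of the multi-index, so appending the index $i$ to $I$ corresponds to prepending $\nabla_{i}$.

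The only delicate point, and the one I would state explicitly as the main obstacle, is the consistency of multi-index conventions: we identify multi-indices that differ only by reordering, so $Ii$ stands for the symmetrized concatenation, and $\nabla_{Ii}u^{\alpha}$ a priori depends on how we read off an ordering from a representative. It is precisely the flatness condition (\ref{9}), equivalently the commutativity of the $\nabla_{i}$'s, that makes $\nabla_{I}$ (and hence $\nabla_{Ii}$) well-defined as in (\ref{1}) and makes $\nabla_{i}\nabla_{I}=\nabla_{Ii}$ as operators on $C^{\infty}(J^{k})$. Once that is noted, the computation above closes the proof.
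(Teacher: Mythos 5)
Your proof is correct and follows essentially the same route as the paper's: both verify the identity on the coordinate functions $x^{j}$ and $u_{I}^{\alpha}$ using $\nabla_{[\infty]}^{\ast}(u_{I}^{\alpha})=\nabla_{I}u^{\alpha}$ from the preceding Proposition, with the same chain of equalities for the jet coordinates. Your extra remarks --- that both sides are derivations along the ring homomorphism $\nabla_{[\infty]}^{\ast}$ so checking generators suffices, and that flatness is what makes $\nabla_{i}\nabla_{I}=\nabla_{Ii}$ order-independent --- are correct points the paper leaves implicit.
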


\begin{proof}
Compute
\[
(\nabla_{\lbrack\infty]}^{\ast}\circ D_{i})(x^{j})=\delta_{i}^{j}=(\nabla
_{i}\circ\nabla_{\lbrack\infty]}^{\ast})(x^{j})
\]
and
\begin{align*}
(\nabla_{\lbrack\infty]}^{\ast}\circ D_{i})(u_{I}^{\alpha})  &  =\nabla
_{\lbrack\infty]}^{\ast}(u_{Ii}^{\alpha})\\
&  =\nabla_{Ii}u^{\alpha}\\
&  =\nabla_{i}(\nabla_{I}u^{\alpha})\\
&  =(\nabla_{i}\circ\nabla_{\lbrack\infty]}^{\ast})(u_{I}^{\alpha}).
\end{align*}

\end{proof}

One can interpret $\mathscr{Y}^{\nabla}:=\operatorname{im}\nabla$ as a PDE on
sections of $\pi$. Then $\nabla$-constant sections are the $l$th jets of
solutions of $\mathscr{Y}^{\nabla}$.

\begin{corollary}
$\mathscr{Y}_{r}^{\nabla}=\operatorname{im}\nabla_{\lbrack r]}$ for all
$r\leq\infty$.
\end{corollary}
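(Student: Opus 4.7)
The plan is to identify both sides semantically as $(k+r)$-jets of local solutions of the PDE $\mathscr{Y}^\nabla$. By the definition $\mathscr{Y}^\nabla = \operatorname{im}\nabla$, a local section $s$ of $\pi$ is a solution of $\mathscr{Y}^\nabla$ exactly when $j_k s$ is a $\nabla$-constant section of $\pi_k$; flatness of $\nabla$ (condition (\ref{9})) guarantees that through every $\theta\in J^{k}$ there passes a (locally unique) such solution $s_\theta$, with $j_k s_\theta(\pi_k(\theta)) = \theta$. This is exactly the data used to set up the maps $\nabla_{[r]}$.

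For the inclusion $\operatorname{im}\nabla_{[r]}\subset\mathscr{Y}^\nabla_r$, every point in $\operatorname{im}\nabla_{[r]}$ is of the form $\nabla_{[r]}(\theta) = j_{k+r}s_\theta(x)$ and is therefore, tautologically, the $(k+r)$-jet of a genuine solution of $\mathscr{Y}^\nabla$; the definition of $r$th prolongation recalled in Section \ref{SecGeomPDE} then places it in $\mathscr{Y}^\nabla_r$. For the reverse inclusion, one has to argue that $\mathscr{Y}^\nabla_r$ contains \emph{only} jets of actual solutions, i.e., that $\mathscr{Y}^\nabla$ is formally integrable. This is the case precisely because $\nabla$ is flat: locally $\mathscr{Y}^\nabla$ is cut out by $\Phi^\alpha_I:=u^\alpha_I - \nabla^\alpha_I$ for $|I|=k+1$, and using the intertwining $\nabla_{[\infty]}^\ast \circ D_i = \nabla_i\circ\nabla_{[\infty]}^\ast$ from the preceding corollary together with the local formula $\nabla_{[r]}^\ast(u^\alpha_I)=\nabla_I u^\alpha$ from the preceding proposition, the total derivatives $D_J\Phi^\alpha_I$ vanish on $\operatorname{im}\nabla_{[r]}$ automatically, so no new independent constraints appear on prolongation and every $(k+r)$-jet satisfying all such relations is realized by $s_\theta$ for $\theta = j_k s(x)$.

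The main technical obstacle is exactly this second inclusion, i.e., the formal integrability step: one must rule out that $\mathscr{Y}^\nabla_r$ be strictly larger than the image of $\nabla_{[r]}$. I would discharge it by induction on $r$, treating the base case $r=0,1$ as immediate from the definitions of $\mathscr{Y}^\nabla$ and $\nabla_{[1]}=\nabla$, and in the inductive step using the intertwining $D_i\leftrightarrow\nabla_i$ to show that the system $\{D_J\Phi^\alpha_I=0\}_{|J|\leq r-1,\,|I|=k+1}$ cutting out $\mathscr{Y}^\nabla_r$ coincides, after pull-back, with the equations $\{u^\alpha_K=\nabla_K u^\alpha\}_{|K|\leq k+r}$ locally cutting out $\operatorname{im}\nabla_{[r]}$. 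Uniqueness of $\nabla$-constant sections through a given point then promotes the local equality of defining equations to the claimed equality of submanifolds, and the case $r=\infty$ follows by passage to the inverse limit.
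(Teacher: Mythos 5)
Your proposal is correct and, at its core, takes the same route as the paper: the decisive step in both is to use the intertwining $\nabla_{[\infty]}^{\ast}\circ D_{i}=\nabla_{i}\circ\nabla_{[\infty]}^{\ast}$ to show that, modulo the lower-order equations, $D_{J}F_{I}^{\alpha}$ reduces to $u_{IJ}^{\alpha}-\nabla_{IJ}u^{\alpha}$, so that the prolongation is cut out by exactly the equations $u_{K}^{\alpha}=\nabla_{K}u^{\alpha}$ defining $\operatorname{im}\nabla_{[r]}$. The extra scaffolding you add (the two-inclusion split, the appeal to uniqueness of $\nabla$-constant sections) is harmless but not needed once the defining equations are matched.
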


\begin{proof}
$\mathscr{Y}^{\nabla}$ is locally given by
\[
F_{I}^{\alpha}=0,\quad|I|{}\leq k,
\]
where $F_{I}^{\alpha}:=$ $u_{I}^{\alpha}-\nabla_{I}u^{\alpha}$. Therefore,
$\mathscr{Y}_{r}^{\nabla}$ is locally given by
\[
D_{J}F_{I}^{\alpha}=0,\quad|J|{}\leq r,\quad
\]
Now,
\begin{align*}
D_{J}F_{I}^{\alpha}  &  =D_{J}u_{I}^{\alpha}-D_{J}\nabla_{I}u^{\alpha}\\
&  =u_{IJ}^{\alpha}-(D_{J}\circ\nabla_{\lbrack r]}^{\ast})(u_{I}^{\alpha})\\
&  =u_{IJ}^{\alpha}-(\nabla_{\lbrack r]}^{\ast}\circ\nabla_{J})(u_{I}^{\alpha
}).
\end{align*}
But $u_{I}^{\alpha}=\nabla_{I}u^{\alpha}$ on $\mathscr{Y}_{r}^{\nabla}$. We
conclude that $\mathscr{Y}_{r}^{\nabla}$ is locally given by
\[
F_{IJ}^{\alpha}=0,\quad|I|{}\leq k,\ |J|{}\leq r.
\]

\end{proof}

Notice that $\operatorname{im}\nabla_{\lbrack\infty]}=\mathscr{Y}_{\infty}^{\nabla}$ is an
HJ subdiffiety of $J^{\infty}$. In particular, $\mathscr{Y}_{r}^{\nabla}$ is foliated by the graphs of the
$(k+r)$th jets of $\nabla$-constant sections.

\begin{remark}
$\nabla_{\lbrack\infty]}:(J^{k},\nabla)\longrightarrow(\mathscr{Y}_{\infty
}^{\nabla},\mathscr{C})$ is an isomorphism of bundles (over $M$) with (flat) connections.
\end{remark}

Now, let $\mathscr{E}\subset J^{\infty}$ be the infinite prolongation of a
$k$th-order differential equation $\mathscr{E}_{0}\subset J^{k}$ determined by
a differential operator $F:J^{k}\longrightarrow V$, $V\longrightarrow M$ being
a vector bundle, i.e., $F$ is a morphism of the bundles $\pi_{k}$ and
$V\longrightarrow M$ and
\[
\mathscr{E}_{0}=\{\theta\in J^{k}:F(\theta)=0\}.
\]

\begin{proposition}
Let $\nabla$ be a flat, holonomic connection in $\pi_{s}$, $s<k$. Then
$\mathscr{Y}_{\infty}^{\nabla}\subset\mathscr{E}$ iff
\begin{equation}
F\circ\nabla_{\lbrack k-s-1]}=0 \label{HJE}%
\end{equation}

\end{proposition}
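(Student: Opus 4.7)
The plan is to pull the inclusion $\mathscr{Y}_\infty^\nabla\subset\mathscr{E}$ back along the embedding $\nabla_{[\infty]}\colon J^s\to J^\infty$ and unwind it using the intertwining relation $\nabla_{[\infty]}^\ast\circ D_i=\nabla_i\circ\nabla_{[\infty]}^\ast$ from the preceding corollary. Since $\nabla_{[\infty]}$ is a closed embedding with image $\mathscr{Y}_\infty^\nabla$, and since $\mathscr{E}=(\mathscr{E}_0)_\infty$ is locally cut out in $J^\infty$ by the components of $D_J F$ for $|J|\geq 0$ (as in formula~(\ref{Prolong})), the inclusion is equivalent to
\[
\nabla_{[\infty]}^\ast(D_J F)=0\qquad\text{for every multi-index }J.
\]

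The second step iterates the intertwining to give $\nabla_{[\infty]}^\ast(D_J F)=\nabla_J(\nabla_{[\infty]}^\ast F)$. Now $F$ is the pullback along $\pi_{\infty,k}$ of a function on $J^k$, and the composition $\pi_{\infty,k}\circ\nabla_{[\infty]}\colon J^s\to J^k$ sends $\theta$ to $[s_\theta]_x^k$; this is precisely the finite map denoted $\nabla_{[k-s-1]}$ in the statement. Hence $\nabla_{[\infty]}^\ast F=F\circ\nabla_{[k-s-1]}$, and the inclusion becomes
\[
\nabla_J\bigl(F\circ\nabla_{[k-s-1]}\bigr)=0\qquad\text{for all }J.
\]

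The conclusion is then immediate: taking $J=\varnothing$ yields the ``only if'' direction~(\ref{HJE}), while conversely, if~(\ref{HJE}) holds then every $\nabla_J$ applied to the zero function is zero. I do not foresee a real obstacle here, since the content is carried entirely by the preceding corollary and by the local description of $\mathscr{E}$ as the common zero locus of the total derivatives of $F$; the only book-keeping issue is matching the index on the finite map $\nabla_{[\,\cdot\,]}$ with the composition $\pi_{\infty,k}\circ\nabla_{[\infty]}$.
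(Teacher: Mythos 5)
Your proof is correct, but it travels a genuinely different road from the paper's. The paper argues through solutions: the ``if'' direction is dismissed as obvious, and for ``only if'' it observes that $\mathscr{Y}_{\infty}^{\nabla}\subset\mathscr{E}$ forces every $\nabla$-constant section to be the $s$-jet of a solution of $\mathscr{E}_{0}$, and then uses the fact that the finite prolongation of $\operatorname{im}\nabla$ sitting inside $J^{k}$ is foliated by the graphs of the $k$-jets of these sections, so $F$ must vanish there. You instead work at the level of defining functions: you characterize the inclusion by the vanishing of $\nabla_{[\infty]}^{\ast}(D_{J}F)$ for all $J$ (using the local description (\ref{Prolong}) of the prolongation), push the total derivatives through $\nabla_{[\infty]}^{\ast}$ via the intertwining corollary $\nabla_{[\infty]}^{\ast}\circ D_{i}=\nabla_{i}\circ\nabla_{[\infty]}^{\ast}$ --- a result the paper proves but does not invoke in this proposition --- and reduce everything to the single equation at $J=\varnothing$. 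Your route buys a uniform treatment of both implications by one computation and makes the ``if'' direction genuinely transparent (derivatives of the zero function vanish), at the cost of being less geometric; the paper's route is shorter and keeps the solution-theoretic meaning of the statement in view. You are also right to flag the index bookkeeping: by the literal definition of $\nabla_{[r]}$ for a connection in $\pi_{s}$, the map $\theta\longmapsto[s_{\theta}]_{x}^{k}$ is $\nabla_{[k-s]}$, and indeed the paper's own proof speaks of $\mathscr{Y}_{k-s}^{\nabla}$; the ``$k-s-1$'' in the statement counts the order of the resulting PDE for the connection coefficients rather than the jet target, and your identification of the map is the correct reading.
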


\begin{proof}
The if implication is obvious. To prove the only if implication, notice that
if $\mathscr{Y}_{\infty}^{\nabla}\subset\mathscr{E}$, then all $\nabla
$-constant sections are $s$th jets of solutions of $\mathscr{E}$. Since
$\mathscr{Y}_{k-s}^{\nabla}$ is foliated by the graphs of $k$-jets of $\nabla
$-constant sections, the assertion follows.
\end{proof}

If $\mathscr{E}_{0}$ is locally given by
\[
F_{a}(\ldots,x^{i},\ldots,u_{I}^{\alpha},\ldots)=0,\quad|I|{}\leq k,
\]
then (\ref{HJE}) is locally given by
\begin{equation}
F_{a}(\ldots,x^{i},\ldots,\nabla_{I}u^{\alpha},\ldots)=0,\quad|I|{}\leq k,
\label{2}%
\end{equation}
which is a system of $(k-s-1)$th-order differential equation for the
$\nabla_{Ji}$'s $|J|{}=s$.

\begin{definition}
\label{Def1}Let $\nabla$ be a flat, holonomic connection in a jet bundle and
let $\mathscr{E}\subset J^{\infty}$ be a diffiety. The HJ diffiety
$\mathscr{Y}_{\infty}^{\nabla}$ will be called an \emph{elementary}
\emph{Hamilton-Jacobi (HJ) diffiety}. If\emph{ }$\mathscr{Y}_{\infty}^{\nabla
}\subset\mathscr{E}$, then $\mathscr{Y}_{\infty}^{\nabla}$ will be called an
\emph{elementary} \emph{HJ subdiffiety} of $\mathscr{E}$. Eq.{} (\ref{HJE})
for $\nabla$ will be called the ($s$th, \emph{generalized}) \emph{HJ equation}
of $\mathscr{E}$, $s=0,1,\ldots,k-1$.
\end{definition}

\begin{example}
\label{ExEL}Let $\pi:\mathbb{R}\times\mathbb{R}^{n}\ni(t,\boldsymbol{x})\longmapsto
t\in\mathbb{R}$ be the trivial bundle. Consider the Euler--Lagrange equations
(\ref{OEL}) in $J^{2}\pi$. Let
\[
\nabla=(dx^{i}-X^{i}dt)\otimes\dfrac{\partial}{\partial x^{i}}%
\]
be a connection in $\pi$, $X^{i}=X^{i}(t,x)$. Since $\pi$ is a bundle over a
$1$-dimensional manifold, $\nabla$ is automatically flat. Eq.{} (\ref{15})
is then the $0$th generalized HJ equation of Eq.{} (\ref{OEL}). This
motivates the choice of name for Eq.{} (\ref{HJE}).
\end{example}

\begin{remark}
Notice that the definition of elementary HJ subdiffiety is not intrinsic (see
Remark \ref{Remark4}) to a given diffiety $\mathscr{E}$, and it actually
depends on the embedding $\mathscr{E}\subset J^{\infty}$. Namely, it is easily
seen by dimensional arguments that changing the embedding $\mathscr{E}\subset
J^{\infty}$ could result in the transformation of an elementary HJ subdiffiety
into a new HJ subdiffiety which does not correspond to any holonomic
connection. This is why, urged by an anonymous referee, we gave
definition \ref{Def2} as the more fundamental one. However, we will mainly
consider the case when $\mathscr{E}$ emerges from a Lagrangian field theory as
the diffiety corresponding to the (Euler--Lagrange) field equations. In this
case $\mathscr{E}$ comes with a canonical embedding $\mathscr{E}\subset
J^{\infty}=J^{\infty}\pi$, where sections of $\pi$ are field configurations,
and the use of elementary HJ diffieties is very natural (see Sections
\ref{SecLHFT} and \ref{SecHJEL} for details).
\end{remark}

In the remaining part of this section we briefly discuss the relation between
general HJ diffieties and elementary ones.

\begin{proposition}
Let $\mathscr{O}\subset J^{\infty}$ be an HJ diffiety. Then, locally, there
exists $k$ such that $\mathscr{O}\subset\mathscr{Y}_{\infty}^{\nabla}$ for
some flat holonomic connection $\nabla$ in $\pi_{k}$.
\end{proposition}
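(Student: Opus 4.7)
My plan is to use the finite-dimensionality of $\mathscr{O}$ to reduce to a finite-order picture, and then to construct $\nabla$ by prescribing a suitable family of its $\nabla$-constant sections.

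First I would exploit $\dim\mathscr{O}<\infty$ to fix $k$ so large that both $\pi_{\infty,k}|_{\mathscr{O}}$ and $\pi_{\infty,k+1}|_{\mathscr{O}}$ are local embeddings, with respective images $\mathscr{O}_k\subset J^k$ and $\mathscr{O}_{k+1}\subset J^{k+1}$. A dimension count then makes $\pi_{k+1,k}\colon\mathscr{O}_{k+1}\to\mathscr{O}_k$ a local diffeomorphism whose inverse, composed with the inclusion, yields a smooth local section $\sigma\colon\mathscr{O}_k\to J^{k+1}$ of $\pi_{k+1,k}$. Under the identification $\mathscr{O}\simeq\mathscr{O}_k$, the Cartan foliation on $\mathscr{O}$ pushes forward to a foliation of $\mathscr{O}_k$ whose leaves are the graphs of the $k$-jets $j_k s_\tau$ of a finite-dimensional family $\{s_\tau\}_{\tau\in T}$ of local sections of $\pi$, and $\sigma(j_k s_\tau(x))=j_{k+1}s_\tau(x)$.

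Next I would, near a reference point $\theta_0=j_k s_{\tau_0}(x_0)\in\mathscr{O}_k$, enlarge $\{s_\tau\}$ to a smooth family $\{s_{(\tau,z)}\}_{(\tau,z)\in T\times Z}$ with $s_{(\tau,0)}=s_\tau$, arranged so that the evaluation
$$
\Phi\colon(x,\tau,z)\longmapsto j_k s_{(\tau,z)}(x)
$$
is a local diffeomorphism onto an open neighborhood $U\ni\theta_0$ in $J^k$. Such an extension is obtained by taking $Z$ to be a finite-dimensional space of polynomial deformations of $s_{\tau_0}$ whose $k$-jets at $x_0$ supply a complement to $T_{\theta_0}\mathscr{O}_k$ inside $T_{\theta_0}J^k$; the inverse function theorem then provides the local diffeomorphism. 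With $\Phi$ at hand I would define
$$
\nabla\colon U\longrightarrow J^{k+1},\qquad \nabla\bigl(j_k s_{(\tau,z)}(x)\bigr):=j_{k+1}s_{(\tau,z)}(x).
$$
By construction $\nabla$ is a smooth section of $\pi_{k+1,k}|_U$, hence a holonomic connection on $\pi_k|_U$; its $\nabla$-constant sections are precisely the $s_{(\tau,z)}$, whose $k$-jet graphs foliate $U$, so Frobenius immediately gives flatness. Since the leaf sections $s_\tau=s_{(\tau,0)}$ of $\mathscr{O}$ are among the $\nabla$-constant sections, $\mathscr{O}\cap\pi_{\infty,k}^{-1}(U)\subset\nabla_{[\infty]}(U)=\mathscr{Y}^{\nabla}_{\infty}\cap\pi_{\infty,k}^{-1}(U)$, as required.

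The main obstacle I foresee is the transversality step: producing the extended family so that $\Phi$ is indeed a local diffeomorphism onto a full neighborhood of $\theta_0$. Equivalently, one must verify that $T_{\theta_0}\mathscr{O}_k$ together with the $k$-jets at $x_0$ of a suitable finite-dimensional space of vertical deformations span $T_{\theta_0}J^k$; this is standard, since the $k$-jet at $x_0$ of polynomial sections of $\pi$ surjects onto any prescribed complement. Once this is in place, flatness, holonomicity of $\nabla$, and the containment $\mathscr{O}\subset\mathscr{Y}^{\nabla}_\infty$ all follow automatically from the construction.
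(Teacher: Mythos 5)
Your argument is correct, but it takes a genuinely different route from the paper's. You work with the family of solutions itself: after projecting $\mathscr{O}$ isomorphically down to $\mathscr{O}_k\subset J^k$, you enlarge the family $\{s_\tau\}$ of underlying sections by a finite-dimensional space of polynomial (vertical) deformations chosen so that the evaluation map $\Phi$ becomes a local diffeomorphism onto a neighborhood $U$ of $\theta_0$, and then read off $\nabla$ from the $(k+1)$-jets of the enlarged family; flatness is automatic because the graphs foliate $U$. The transversality step you flag is indeed the only real point to check, and it goes through because $T_{\theta_0}\mathscr{O}_k$ contains an $R$-plane, hence spans $T_{\theta_0}J^k$ together with the $\pi_k$-vertical subspace, onto any complement of which the $k$-jets at $x_0$ of polynomial deformations surject. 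The paper instead projects $O\subset J^{k+1}$ to $N\subset J^k$ carrying an involutive field of $R$-planes, picks a Lie field transverse to $N$ (using that lifts of vector fields on $E$ span every tangent space of $J^k$), and drags $N$ together with its distribution along the flow, iterating one dimension at a time until $N$ fills an open set; involutivity is preserved because Lie fields are symmetries of the Cartan distribution. Your construction is more direct (a single application of the inverse function theorem, no iteration), whereas the paper's Lie-field mechanism is the one that generalizes to Remark \ref{Rem1}, where the extension must additionally stay inside a given equation $\mathscr{E}$ and is achieved by using Lie \emph{symmetries} of $\mathscr{E}_0$ — polynomial deformations of solutions would not remain solutions there.
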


\begin{proof}
Let $\mathscr{O}\subset J^{\infty}$ be an HJ diffiety. $\mathscr{O}=O_{\infty}$
for some differential equation $O\subset J^{k+1}$. Since $\mathscr{O}$ is
finite dimensional, $k$ can be chosen such that $\dim O=\dim O_{1}=\cdots
=\dim\mathscr{O}$. In particular, for any $\theta\in O$, $\mathscr{C}_{k}%
(\theta)\cap T_{\theta}O$ contains just one $R$-plane $\Theta$. Therefore, $O$
is $\pi_{k+1,k}$-horizontal (otherwise its tangent space would contain some
$\pi_{k+1,k}$-vertical tangent vector and one could find more $R$-planes in
$\mathscr{C}_{k}(\theta)\cap T_{\theta}O$). Put $N:=\pi_{k+1,k}(O)$. Locally
$N$ is a submanifold in $J^{k}$ such that $\dim N=\dim O$. If $\dim N=\dim
J^{k}$, then $O$ is already the image of a local, flat, holonomic connection
in $\pi_{k}$. Thus assume that $\dim N<\dim O$. Suppose that $\theta\in O$ and
$\theta^{\prime}=\pi_{k+1,k}(\theta)\in N$. In view of remark \ref{Remark3},
there exists at least one Lie field $X\in D(J^{k})$ such that $X_{\theta
^{\prime}}$ is transverse to $N$. Thus $X$ is transverse to $N$ locally around
$\theta^\prime$. Points in $O$ may be understood as $R$-planes at points of $N$
\cite{b...99}. They form an $n$-dimensional, involutive distribution on $N$.
Transporting both $N$ and the distribution on it along the flow of $X$, we may
produce a new submanifold $N^{\prime}\subset J^{k}$ with an involutive
distribution on it made of $R$-planes. It corresponds to a $\pi_{k+1,k}%
$-horizontal submanifold $O^{\prime}$ in $J^{k+1}$ locally containing $O$ (in
fact $O^{\prime}$ is obtained by transporting $O$ along the flow of $X_{1}\in
D(J^{k+1})$). If $\dim N^{\prime}=\dim J^{k}$, then $O^{\prime}$ is already
the image of a local, flat, holonomic connection in $\pi_{k}$. Otherwise we
may iterate the procedure. In the end we will produce the connection that we are
searching for.
\end{proof}

\begin{remark}
\label{Rem1}Let $\mathscr{E}\subset J^{\infty}$ be the infinite prolongation
of a $k$th-order differential equation $\mathscr{E}_{0}\subset J^{k}$ and let
$\mathscr{O}\subset\mathscr{E}$ be a finite dimensional subdiffiety. Let
$\mathscr{O}$ be the infinite prolongation of a submanifold $O\subset J^{s}$,
$s<k$, with $\dim O=\dim O_{1}=\cdots=\dim\mathscr{O}$. Clearly $O_{k-s}%
\subset\mathscr{E}_{0}$. Suppose that $r\leq k-s$, $\ell$ is the codimension of $O_{r}$
in $J^{s+r}$, and $\theta\in O_{r}$. If $\mathscr{E}_{0}$ possesses $\ell$ Lie
symmetries transverse to $O_{r}$ at $\theta$, then $\mathscr{O}$ can be
locally extended to $\mathscr{Y}_{\infty}^{\nabla}$ for some flat holonomic
connection $\nabla$ in $\pi_{r-1}$, such that $\mathscr{Y}_{\infty}^{\nabla
}\subset\mathscr{E}$. This can be easily proved along the same lines as in the
proof of the previous proposition.
\end{remark}

\section{Examples of HJ\ Diffieties\label{SecHJDiffEx}}

\begin{example}
Consider the Burgers equation
\[
u_{t}=u_{xx}+uu_{x}.
\]
It may be understood as a submanifold in $J^{2}\pi$ with $\pi$ the trivial
bundle $\pi:\mathbb{R}^{3}\ni(t,x,u)\longmapsto(t,x)\in\mathbb{R}$. Let
\[
\nabla=(du-Adt-Bdx)\otimes\frac{\partial}{\partial u}%
\]
be a connection in $\pi$, $A=A(t,x,u)$, $B=B(t,x,u)$. $\nabla$ is flat iff
\begin{equation}
B_{t}-A_{x}+AB_{u}-BA_{u}=0. \label{4}%
\end{equation}
The $0$th generalized HJ equation\ reads
\begin{equation}
A=B_{x}-BB_{u}-uB. \label{6}%
\end{equation}
Substituting (\ref{6}) into (\ref{4}) we find%
\begin{equation}
B_{t}-B_{xx}-2BB_{ux}-B^{2}B_{uu}-uB_{x}-B^{2}=0. \label{7}%
\end{equation}
Search for solutions of (\ref{7}) in the form $B=\alpha(x,t)u$. We must have
\begin{align*}
\alpha_{x}  &  =-\alpha^{2}\\
\alpha_{t}  &  =0
\end{align*}
and, therefore $\alpha=1/(x-x_{0})$, $x_{0}$ being an integration constant.
Thus $B=u/(x-x_{0})$ and $A=u^{2}/(x-x_{0})$. $\nabla$-constant sections are
the solutions of the (compatible) system
\[
\left\{
\begin{array}
[c]{l}%
u_{t}=u^{2}/(x-x_{0})\\
u_{x}=u/(x-x_{0})
\end{array}
\right.  ,
\]
i.e.,
\[
u=-\frac{x-x_{0}}{t-t_{0}}%
\]
($t_{0}$ being a new integration constant), which are indeed solutions of the
Burgers equation.
\end{example}

\begin{example}
In the same bundle as in the previous example, consider the heat equation
\[
u_{t}=u_{xx}.
\]
Let $\nabla$ be as above. The $0$th generalized HJ equation\ reads
\begin{equation}
A=B_{x}+BB_{u}. \label{3}%
\end{equation}
Substituting (\ref{3}) into (\ref{4}) we find
\begin{equation}
B_{t}-B_{xx}-2BB_{ux}-B^{2}B_{uu}=0. \label{5}%
\end{equation}
Search for solutions of (\ref{7}) in the form $B=\alpha(x,t)u$. We must have
\[
\alpha_{t}-\alpha_{xx}-2\alpha\alpha_{x}=0,
\]
One solution is $\alpha=\dfrac{1}{2}\phi$, $\phi$ being a solution of the
Burgers equation. Choose, for instance $\phi=-x/t$. Then
\begin{align*}
B  &  =-\dfrac{x}{2t}u,\\
A  &  =\dfrac{x^{2}-2t}{4t^{2}}u.
\end{align*}
$\nabla$-constant sections are the solutions of the (compatible) system
\[
\left\{
\begin{array}
[c]{l}%
u_{t}=\dfrac{x^{2}-2t}{4t^{2}}u\\
u_{x}=-\dfrac{x}{2t}u
\end{array}
\right.  ,
\]
i.e.,
\[
u=u_{0}\exp\left[  -\left(  \dfrac{1}{\sqrt{t}}+\dfrac{x^{2}}{4t}\right)
\right]
\]
($u_{0}$ being a new integration constant), which are indeed solutions of the
heat equation.
\end{example}

\begin{example}
\label{Examp1}In the same bundle as in the previous examples, consider the KdV
equation
\begin{equation}
u_{t}=6uu_{x}-u_{xxx}, \label{KdV}%
\end{equation}
and the corresponding diffiety $\mathscr{E}_{\mathrm{KdV}}$. The $2$nd-order
system of PDEs
\[
O:\left\{
\begin{array}
[c]{l}%
u_{t}=0\\
u_{xx}=3u^{2}\\
u_{xt}=0\\
u_{tt}=0
\end{array}
\right.
\]
is a $4$-dimensional one and determines a $4$-dimensional HJ subdiffiety
$\mathscr{O}$ of $\mathscr{O\subset E}_{\mathrm{KdV}}$. We now search for an
elementary HJ subdiffiety of $\mathscr{O\subset E}_{\mathrm{KdV}}$ containing
$\mathscr{O}$.

The Galilean boost
\begin{equation}
Y=-6t\frac{\partial}{\partial x}+\frac{\partial}{\partial u} \label{11}%
\end{equation}
is a Lie symmetry of (\ref{KdV}) and its second prolongation
\[
Y^{(2)}=Y+6u_{x}\frac{\partial}{\partial u_{t}}+6u_{xx}\frac{\partial
}{\partial u_{xt}}+12u_{xt}\frac{\partial}{\partial u_{tt}}%
\]
is transverse to $O$. Denote its flow by $\{A_{\tau}\}$. Then $\overline
{O}=\bigcup_{\tau}A_{\tau}(O)$ is a $5$-dimensional $2$nd-order system of
PDEs determining a $5$-dimensional subdiffiety $\overline{\mathscr{O}}%
\subset\mathscr{E}_{\mathrm{KdV}}$. Moreover, in view of Remark \ref{Rem1},
$\overline{\mathscr{O}}=\mathscr{Y}_{\infty}^{\nabla}$ for some flat holonomic
connection $\nabla$ in $\pi_{1}$. In particular, $\nabla$ is a solution of the
$1$st generalized HJ equation. Let us determine $\overline{O}$. $A_{\tau}$ is
given by
\begin{gather*}
A_{\tau}^{\ast}(x)=x-6\tau t,\quad A_{\tau}^{\ast}(t)=t,\quad A_{\tau}^{\ast
}(u)=u+\tau,\\
A_{\tau}^{\ast}(u_{x})=u_{x},\quad A_{\tau}^{\ast}(u_{t})=u_{t}+6\tau u_{x},\\
A_{\tau}^{\ast}(u_{xx})=u_{xx},\quad A_{\tau}^{\ast}(u_{xt})=u_{xt}+6\tau
u_{xx},\quad A_{\tau}^{\ast}(u_{tt})=u_{tt}+12\tau u_{xt}+36\tau^{2}u_{xx}.
\end{gather*}
Therefore, $\overline{O}$ is parametrically given by
\[
\overline{O}:\left\{  {%
\begin{array}
[c]{l}%
x=y-6\tau s\\
t=s\\
u=v+\tau\\
u_{x}=p\\
u_{t}=6\tau p\\
u_{xx}=3v^{2}\\
u_{xt}=18\tau v^{2}\\
u_{tt}=108\tau^{2}v^{2}%
\end{array}
}\right.  .
\]
Eliminating the $5$ parameters $(y,s,v,p,\tau)$ we get
\[
\overline{O}:\left\{  {%
\begin{array}
[c]{l}%
u_{xx}=\dfrac{(u_{t}-6uu_{x})^{2}}{12u_{x}^{2}}\\
u_{xt}=\dfrac{u_{t}(u_{t}-6uu_{x})^{2}}{12u_{x}^{3}}\\
u_{tt}=\dfrac{u_{t}^{2}(u_{t}-6uu_{x})^{2}}{12u_{x}^{4}}%
\end{array}
}\right.  ,
\]
and $\nabla$ is given by
\[
\nabla=(du-u_{x}dx-u_{t}dt)\otimes\frac{\partial}{\partial u}+(du_{x}%
-Adx-Cdt)\otimes\frac{\partial}{\partial u_{x}}+(du_{t}-Cdx-Bdt)\otimes
\frac{\partial}{\partial u_{t}}%
\]
with
\[
A=\dfrac{(u_{t}-6uu_{x})^{2}}{12u_{x}^{2}},\quad B=-\dfrac{u_{t}^{2}%
(u_{t}-6uu_{x})^{2}}{12u_{x}^{4}},\quad C=\dfrac{u_{t}(u_{t}-6uu_{x})^{2}%
}{12u_{x}^{3}}.
\]
A direct computation shows that $\nabla$ is indeed flat and it is a solution
of the $1$st generalized HJ equation
\[
A_{x}+u_{x}A_{u}+AA_{u_{x}}+CA_{u_{t}}+u_{t}-6uu_{x}=0.
\]
Clearly, solutions of $\overline{O}$, or, which is the same, $\nabla$-constant
sections, are boosted solutions of $O$. Namely, solutions of $O$ are
\begin{equation}
u=2^{1/3}\wp(2^{-1/3}(x-c_{0});0,c_{1}), \label{12}%
\end{equation}
$\wp(z;\omega_{1},\omega_{2})$ being the Weierstrass elliptic function of $z$,
with periods $\omega_{1},\omega_{2}$, and $c_{0},c_{1}$ integration constants.
Solutions of $\overline{O}$ are then found by transporting (\ref{12}) along the
flow of $Y$. They are
\[
u=2^{1/3}\wp(2^{-1/3}(x-ct-c_{0});0,c_{1})+c/6,
\]
where $c$ is a new integration constant, and they are (local) solutions of the
KdV equation.
\end{example}

\section{The Lagrangian-Hamiltonian Formalism in Field Theory\label{SecLHFT}}

In this section we review the Lagrangian-Hamiltonian formalism for higher
derivative field theories. Details can be found in \cite{v10b} (see also
\cite{v10}).

\begin{definition}
A \emph{Lagrangian field theory }of order $\leq k+1$, $0\leq k<\infty$, is a
pair $(\pi,\mathscr{L})$, where $\pi:E\longrightarrow M$ is a fiber bundle and
$\mathscr{L}\in\Lambda_{n}^{n}(J^{k+1},\pi_{k+1})\subset\overline{\Lambda}%
{}^{n}$, $n=\dim M$, is a Lagrangian density.
\end{definition}

As already recalled, the horizontal cohomology class $[\mathscr{L}]\in
H^{n}(\overline{\Lambda},\overline{d})$ identifies with the action functional
$\int_{M}\mathscr{L}$ which is extremized by solutions of the Euler--Lagrange
(EL) field equations
\[
\boldsymbol{E}(\mathscr{L})=0.
\]
The EL equations are $2(k+1)$th-order PDEs. Denote by $\mathscr{E}_{EL}\subset
J^{\infty}$ the corresponding diffiety.

The Lagrangian field theory $(\pi,\mathscr{L})$ determines a canonical PD
Hamiltonian system $\omega_{\mathscr{L}}\in\Lambda^{n+1}(J^{\dag}\pi_{\infty
})$ in $J^{\dag}\pi_{\infty}\longrightarrow M$ (the reduced multimomentum
bundle of $\pi_{\infty}$). If $\mathscr{L}$ is locally given by
$\mathscr{L}=Ld^{n}x$, $L\in C^{\infty}(J^{k+1})$, then $\omega_{\mathscr{L}}$
is locally given by
\[
\omega_{\mathscr{L}}=dp_{\alpha}^{I.i}\wedge du_{I}^{\alpha}\wedge
d^{n-1}x_{i}-dE_\mathscr{L}\wedge d^{n}x,\quad E_{\mathscr{L}}=u_{Ii}^{\alpha}p_{\alpha
}^{I.i}-L,
\]
the $p_{\alpha}^{I.i}$'s being momentum coordinates associated with the
$u_{I}^{\alpha}$. The corresponding PD Hamilton equations
\begin{equation}
i^{1,n}(j_{1}\sigma)\omega_{\mathscr{L}}|_{\sigma}=0, \label{ELHE}%
\end{equation}
for sections $\sigma$ of $J^{\dag}\pi_{\infty}\longrightarrow M$, locally
read
\[
\left\{
\begin{array}
[c]{l}%
p_{\alpha}^{I.i},_{i}=\dfrac{\partial L}{\partial u_{I}^{\alpha}}-\delta
_{Ji}^{I}\,p_{\alpha}^{J.i}\\
u_{I}^{\alpha},_{i}=u_{Ii}^{\alpha}%
\end{array}
\right.
\]
where $({}\bullet{}),_{i}$ denotes differentiation of $({}\bullet{})$ with
respect to $x^{i}$, $i=1,\ldots,n$, and $\delta_{K}^{I}=1$ when the multi-indices
$I,K$ coincide, and $\delta_{K}^{I}=0$ otherwise. We will refer to equations
(\ref{ELHE}) as the \emph{Euler--Lagrange--Hamilton (ELH) equations on }%
$J^{\dag}\pi_{\infty}$. They can be interpreted as Euler--Lagrange equations
corresponding to an Hamilton--Pontryagin-like variational principle \cite{l19}
and their solutions are characterized by the following

\begin{theorem}
\label{Theorem1}A section $\sigma$ of $J^{\dag}\pi_{\infty}\longrightarrow M$
is a solution of Eq.{} (\ref{ELHE}) iff, locally, $\sigma=\vartheta\circ
j_{\infty}s$, where $s$ is a solution of the EL equations and $\vartheta$ is a
Legendre form.
\end{theorem}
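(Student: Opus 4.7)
The plan is a local computation in coordinates, using the defining identity $\boldsymbol{E}(\mathscr{L})-d^V\!\mathscr{L}=\overline{d}\vartheta$ of a Legendre form as the central tool. First I would observe that in local coordinates $(x^i,u^\alpha_I,p^{I.i}_\alpha)$ on $J^\dag\pi_\infty$ the ELH equations split into two decoupled groups. The second group, $u^\alpha_I,_i=u^\alpha_{Ii}$, is equivalent to saying that the $u$-components of $\sigma$ form the infinite jet of some local section $s$ of $\pi$, so any solution has the form $\sigma=(j_\infty s,\,p)$ for a family of functions $p^{I.i}_\alpha(x)$ on $M$. The same group is automatically satisfied by any composite $\vartheta\circ j_\infty s$, so the theorem reduces to analysing the first group, $p^{I.i}_\alpha,_i=\partial L/\partial u^\alpha_I-\delta^I_{Ji}p^{J.i}_\alpha$, along $j_\infty s$.

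For the sufficient direction I would take a Legendre form $\vartheta$ and an EL solution $s$, set $\sigma:=\vartheta\circ j_\infty s$, and pull back the Legendre identity along $j_\infty s$. Since $\boldsymbol{E}(\mathscr{L})|_{j_\infty s}=0$, one obtains $(\overline{d}\vartheta)|_{j_\infty s}=-(d^V\!\mathscr{L})|_{j_\infty s}$. Expanding $\overline{d}\vartheta$ with $\vartheta=\vartheta^{I.i}_\alpha\,d^V\!u^\alpha_I\otimes d^{n-1}x_i$ produces $(D_i\vartheta^{I.i}_\alpha+\delta^I_{Ji}\vartheta^{J.i}_\alpha)\,d^V\!u^\alpha_I\otimes d^n x$, the shift term coming from $\overline{d}\,d^V\!u^\alpha_I=-d^V\!u^\alpha_{Ii}\wedge dx^i$. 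Matching against the components of $d^V\!\mathscr{L}=(\partial L/\partial u^\alpha_I)\,d^V\!u^\alpha_I\wedge d^n x$ yields the first ELH equation verbatim.

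For the necessary direction, given a solution $\sigma=(j_\infty s,\,p)$ of ELH I would first extract the EL equations for $s$ directly from the first ELH group by the standard Ostrogradsky-type integration-by-parts recursion: one solves for $p^{I.i}_\alpha$ in descending order of $|I|$ (the recursion terminates since $\partial L/\partial u^\alpha_I$ vanishes once $|I|$ exceeds the order of $\mathscr{L}$) and substitutes back at $|I|=0$, whereupon the telescoping sum collapses to $\delta L/\delta u^\alpha\circ j_\infty s=0$. Next I would fix any local Legendre form $\vartheta_0$, for instance the one displayed after Eq.\ (\ref{Legendre}). The difference $\chi:=(p^{I.i}_\alpha-\vartheta_0{}^{I.i}_\alpha\circ j_\infty s)\,d^V\!u^\alpha_I\otimes d^{n-1}x_i$ satisfies the homogeneous version of the first ELH equation, which, by the expansion of $\overline{d}$ carried out in the previous paragraph, is precisely the cocycle condition in $\mathscr{C}\Lambda^1\otimes\overline{\Lambda}{}^{n-1}|_{\operatorname{im}j_\infty s}$. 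The acyclicity statement of Remark \ref{Remark1}, applied to the one-leaf subdiffiety $\operatorname{im}j_\infty s$ (where the horizontal complex reduces to the ordinary de Rham complex of $M$), then gives $\chi=\overline{d}\lambda|_{j_\infty s}$ for some $\lambda\in\mathscr{C}\Lambda^1\otimes\overline{\Lambda}{}^{n-2}$; by Remark \ref{Remark2} the form $\vartheta:=\vartheta_0+\overline{d}\lambda$ is again a Legendre form, and by construction $\vartheta\circ j_\infty s=\sigma$.

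The hardest step is the necessary direction, and specifically the passage from momentum data known only along the graph of $j_\infty s$ to a genuine Legendre form on an open neighbourhood of that graph in $J^\infty$. This is the step that really uses the bicomplex structure and the classification of Legendre forms as a $\overline{d}$-exact torsor, and it requires a careful sign-check to ensure that the homogeneous first ELH equation matches exactly the horizontal cocycle condition; once this match is in place everything else is a routine unwinding of definitions.
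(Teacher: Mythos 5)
The paper does not actually prove Theorem \ref{Theorem1}: it is imported from \cite{v10b} and stated without argument in the review Section \ref{SecLHFT}, so there is no in-paper proof to compare against line by line. That said, your proposal is correct, and it is essentially the same machinery the paper deploys for the finite-order analogue, Theorem \ref{13} (steps 1 $\Rightarrow$ 2 $\Rightarrow$ 3): rewrite the first ELH group as the statement that $\overline{d}|_{j_{\infty}s}$ applied to the momentum form equals $-(d^{V}\!\mathscr{L})|_{j_{\infty}s}$, subtract a reference Legendre form $\vartheta_{0}$ via the identity (\ref{Legendre}), conclude from Remark \ref{Remark1} that $\boldsymbol{E}(\mathscr{L})|_{j_{\infty}s}$, being $\overline{d}|$-exact, vanishes (so $s$ solves EL), and then use acyclicity of (\ref{ComplCL1j}) in degree $n-1$ together with Remark \ref{Remark2} to absorb the closed difference into a corrected Legendre form. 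Your sign bookkeeping for $\overline{d}(d^{V}\!u_{I}^{\alpha}\otimes d^{n-1}x_{i})$, producing the shift term $\delta_{Ji}^{I}\vartheta_{\alpha}^{J.i}$, is the right mechanism and does reproduce the first ELH equation verbatim. Two small remarks. First, your separate Ostrogradsky-type telescoping to extract the EL equations is redundant: once you observe that $\chi$ has $\overline{d}|_{j_{\infty}s}\chi=-\boldsymbol{E}(\mathscr{L})|_{j_{\infty}s}$ (the level-$0$ component of the ``homogeneous'' equation is exactly the EL expression, not zero a priori), Remark \ref{Remark1} delivers both $\boldsymbol{E}(\mathscr{L})|_{j_{\infty}s}=0$ and the closedness of $\chi$ in one stroke, and avoids the combinatorial care the telescoping needs with symmetrized multi-indices. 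Second, since $|_{j_{\infty}s}$ denotes the induced bundle, the acyclicity gives you $\chi=\overline{d}|_{j_{\infty}s}\mu$ with $\mu$ a section of the induced bundle only; you should say explicitly that you extend $\mu$ to some $\lambda\in\mathscr{C}\Lambda^{1}\otimes\overline{\Lambda}{}^{n-2}$ (restriction is surjective and is a morphism of complexes, so only $\lambda|_{j_{\infty}s}=\mu$ matters) before forming $\vartheta:=\vartheta_{0}-\overline{d}\lambda$. With these two touch-ups the argument is complete.
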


In particular, Equations (\ref{ELHE}) covers the EL equations, i.e., if $\sigma$ is
a solution of (\ref{ELHEk}), then the composition
\[
M\overset{\sigma}{\longrightarrow}J^{\dag}\pi_{\infty}\longrightarrow
J^{\infty}%
\]
is of the form $j_{\infty}s$ for a solution $s$ of the EL equations, and all
solutions of the EL equations can be obtained like this.

\begin{lemma}
\label{Lemma1}Let $T$ be a section of $J^{\dag}\pi_{\infty}\longrightarrow
J^{\infty}$, i.e., $T\in\mathscr{C}\Lambda^{1}\otimes\overline{\Lambda}{}^{n-1}$, then
\[
T^{\ast}\omega_{\mathscr{L}}=d(\mathscr{L}+T).
\]

\end{lemma}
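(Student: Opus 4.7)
The plan is to observe that $\omega_{\mathscr{L}}$ admits a local primitive $\Theta_{\mathscr{L}}$ and then show directly that $T^{\ast}\Theta_{\mathscr{L}}=\mathscr{L}+T$, after which the lemma follows from the naturality of $d$ with respect to pull-back.

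First, I would note the local identity
\[
\omega_{\mathscr{L}}=d\Theta_{\mathscr{L}},\qquad \Theta_{\mathscr{L}}:=p_{\alpha}^{I.i}\,du_{I}^{\alpha}\wedge d^{n-1}x_{i}-E_{\mathscr{L}}\,d^{n}x,
\]
which is immediate from the displayed local expression of $\omega_{\mathscr{L}}$, since $d(du_{I}^{\alpha})=0$, $d(d^{n-1}x_{i})=0$, and $d(d^{n}x)=0$. Next, I would fix local coordinates on $J^{\dag}\pi_{\infty}$ as in the paper, so that a section $T$ is determined by its components $T_{\alpha}^{I.i}:=T^{\ast}(p_{\alpha}^{I.i})\in C^{\infty}(J^{\infty})$; these are precisely the coefficients in the decomposition $T=T_{\alpha}^{I.i}\,d^{V}\!u_{I}^{\alpha}\otimes d^{n-1}x_{i}$ under the identification $\mathscr{C}\Lambda^{1}\otimes\overline{\Lambda}{}^{n-1}\hookrightarrow\Lambda^{n}(J^{\infty})$. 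Since $T$ is a section over $J^{\infty}$, we also have $T^{\ast}(x^{i})=x^{i}$ and $T^{\ast}(u_{I}^{\alpha})=u_{I}^{\alpha}$.

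Now I would compute
\[
T^{\ast}\Theta_{\mathscr{L}}=T_{\alpha}^{I.i}\,du_{I}^{\alpha}\wedge d^{n-1}x_{i}-\bigl(u_{Ii}^{\alpha}T_{\alpha}^{I.i}-L\bigr)d^{n}x,
\]
and use the splitting $du_{I}^{\alpha}=d^{V}\!u_{I}^{\alpha}+u_{Ij}^{\alpha}dx^{j}$ together with the elementary identity $dx^{j}\wedge d^{n-1}x_{i}=\delta_{i}^{j}\,d^{n}x$ to get
\[
T_{\alpha}^{I.i}\,du_{I}^{\alpha}\wedge d^{n-1}x_{i}=T_{\alpha}^{I.i}\,d^{V}\!u_{I}^{\alpha}\wedge d^{n-1}x_{i}+u_{Ii}^{\alpha}T_{\alpha}^{I.i}\,d^{n}x.
\]
The two occurrences of $u_{Ii}^{\alpha}T_{\alpha}^{I.i}\,d^{n}x$ cancel (this is exactly the role of $E_{\mathscr{L}}$), leaving
\[
T^{\ast}\Theta_{\mathscr{L}}=T_{\alpha}^{I.i}\,d^{V}\!u_{I}^{\alpha}\wedge d^{n-1}x_{i}+L\,d^{n}x=T+\mathscr{L}.
\]

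Finally, applying $d$ to both sides and using $d\circ T^{\ast}=T^{\ast}\circ d$ yields
\[
T^{\ast}\omega_{\mathscr{L}}=T^{\ast}d\Theta_{\mathscr{L}}=d\,T^{\ast}\Theta_{\mathscr{L}}=d(\mathscr{L}+T),
\]
and since the identity is local and both sides are globally defined, it holds globally. No step is really a serious obstacle; the only subtlety is bookkeeping the passage between $du_{I}^{\alpha}$ and its Cartan-form part $d^{V}\!u_{I}^{\alpha}$, which is precisely what makes the Legendre-like cancellation with $E_{\mathscr{L}}$ work.
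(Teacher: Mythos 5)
Your proposal is correct and follows essentially the same route as the paper's own (local) proof: pull back the primitive $\varrho_{\mathscr{L}}=p_{\alpha}^{I.i}\,du_{I}^{\alpha}\wedge d^{n-1}x_{i}-E_{\mathscr{L}}\,d^{n}x$ along $T$, use the splitting of $du_{I}^{\alpha}$ to cancel the $u_{Ii}^{\alpha}T_{\alpha}^{I.i}$ terms against $E_{\mathscr{L}}$, and then commute $d$ with $T^{\ast}$. You merely spell out the intermediate cancellation a bit more explicitly than the paper does.
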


\begin{proof}
The lemma can be proved in a coordinate free manner, using the intrinsic
definition of $\omega_{\mathscr{L}}$. We here propose a local proof. Let $T$
be locally given by $T^{\ast}(p_{\alpha}^{I.i})=T_{\alpha}^{I.i}\in C^{\infty
}(J^{\infty})$, i.e.,
\[
T=T_{\alpha}^{I.i}d^{V}\!u_{I}^{\alpha}\otimes d^{n-1}x_{i}.
\]
Notice that $\omega_{\mathscr{L}}%
=d\varrho_{\mathscr{L}}$, with $\varrho_{\mathscr{L}}\in\Lambda^{n}(J^{\dag
}\pi_{\infty})$ locally given by
\[
\varrho_{\mathscr{L}}=p_{\alpha}^{I.i}du_{I}^{\alpha}\wedge d^{n-1}%
x_{i}-Ed^{n}x.
\]
Then
\begin{align*}
T^{\ast}\omega_{\mathscr{L}}  &  =dT^{\ast}\varrho_{\mathscr{L}}\\
&  =d[T_{\alpha}^{I.i}du_{I}^{\alpha}\wedge d^{n-1}x_{i}-(u_{Ii}^{\alpha
}T_{\alpha}^{I.i}-L)d^{n}x]\\
&  =d(T_{\alpha}^{I.i}d^{V}\!u_{I}^{\alpha}\wedge d^{n-1}x_{i}+Ld^{n}x)\\
&  =d(T+\mathscr{L}).
\end{align*}

\end{proof}

The Lagrangian field theory $(\pi,\mathscr{L})$ determines two more canonical
PD Hamiltonian systems. First of all, the PD Hamiltonian system $\omega
_{\mathscr{L}}$ is the pull-back of a unique PD Hamiltonian system $\omega$ in
$\pi_{k+1,k}^{\circ}(J^{\dag}\pi_{k})\longrightarrow M$. $\omega$ is locally
given by%
\[
\omega=\sum_{|I|{}\leq k}dp_{\alpha}^{I.i}\wedge du_{I}^{\alpha}d^{n-1}%
x_{i}-dE\wedge d^{n}x,\quad E=\sum_{|I|{}\leq k}u_{Ii}^{\alpha}p_{\alpha
}^{I.i}-L.
\]
The associated PD Hamilton equations
\begin{equation}
i^{1,n}(j_{1}\sigma)\omega|_{\sigma}=0, \label{ELHEk}%
\end{equation}
for sections $\sigma$ of $\pi_{k+1,k}^{\circ}(J^{\dag}\pi_{k})\longrightarrow
M$ locally read
\begin{equation}
\left\{
\begin{array}
[c]{llc}%
\dfrac{\partial L}{\partial u_{K}^{\alpha}}-\delta_{Ji}^{K}\,p_{\alpha}%
^{J.i}=0 & |K|{}=k+1 & \quad I\\
p_{\alpha}^{I.i},_{i}=\dfrac{\partial L}{\partial u_{I}^{\alpha}}-\delta
_{Ji}^{I}\,p_{\alpha}^{J.i} & |I|{}\leq k & \quad II\\
u_{J}^{\alpha},_{i}=u_{Ji}^{\alpha} & |J|{}\leq k & \quad III
\end{array}
\right.  . \label{22}%
\end{equation}
We will refer to them as the \emph{ELH equations on }$\pi_{k+1,k}^{\circ
}(J^{\dag}\pi_{k})$. They cover the EL equations as well, i.e., if $\sigma$ is
a solution of (\ref{ELHEk}), then the composition
\[
M\overset{\sigma}{\longrightarrow}\pi_{k+1,k}^{\circ}(J^{\dag}\pi
_{k})\longrightarrow J^{k}%
\]
is of the form $j_{k}s$ for a solution $s$ of the EL equations, and all
solutions of the EL equations can be obtained like this. (\ref{22}) shows that
solutions of (\ref{ELHEk}) take values in the \emph{first constraint subbundle
}$\mathscr{P}\longrightarrow M$, $\mathscr{P}\subset\pi_{k+1,k}^{\circ
}(J^{\dag}\pi_{k})$, which is locally defined by
\[
\dfrac{\partial L}{\partial u_{K}^{\alpha}}-\delta_{Ji}^{K}\,p_{\alpha}%
^{J.i}=0,\quad|K|{}=k+1.
\]

Let $\mathscr{P}_{0}\subset J^{\dag}\pi_{k}$ be the image of $\mathscr{P}$
under the projection $\pi_{k+1,k}^{\circ}(J^{\dag}\pi_{k})\longrightarrow
J^{\dag}\pi_{k}$. Under the (not too restrictive) hypothesis that the projection
$\mathscr{P}\longrightarrow\mathscr{P}_{0}$ is a smooth submersion with
connected fibers, $i_{\mathscr{P}}^{\ast}\omega$ is the pull-back of a unique
PD Hamiltonian system $\omega_{0}$ in the bundle $\mathscr{P}_{0}%
\longrightarrow M$. We will refer to the associated PD Hamilton equations
\begin{equation}
i^{1,n}(j_{1}\sigma_{0})\omega_{0}|_{\sigma_{0}}=0 \label{HDWE}%
\end{equation}
for sections $\sigma_{0}$ of $\mathscr{P}_{0}\longrightarrow M$ as the
\emph{Hamilton--de Donder--Weyl (HDW) equations}. When $(\pi,\mathscr{L})$ is a
(hyper)regular theory, i.e., $\mathscr{P}_{0}=J^{\dag}\pi_{k}$, they locally
coincides with the de Donder (higher derivative, Hamilton-like) field
equations \cite{d35}
\[
\left\{
\begin{array}
[c]{l}%
p_{\alpha}^{I.i},_{i}=-\dfrac{\partial H}{\partial u_{I}^{\alpha}}\\
u_{I}^{\alpha},_{i}=\dfrac{\partial H}{\partial p_{\alpha}^{I.i}}%
\end{array}
\right.  ,
\]
where the local function $H\in C^{\infty}(J^{\dag}\pi_{k})$ is uniquely
defined by the condition that its pull-back via the projection\emph{
}$\mathscr{P}\longrightarrow J^{\dag}\pi_{k}$ is $i_{\mathscr{P}}^{\ast}(E)\in
C^{\infty}(\mathscr{P})$.

If $\sigma$ is a solution of Eq.{} (\ref{ELHEk}), then the composition
\[
M\overset{\sigma}{\longrightarrow}\pi_{k+1,k}^{\circ}(J^{\dag}\pi
_{k})\longrightarrow J^{\dag}\pi_{k}%
\]
is a solution of (\ref{HDWE}). However, Eq.{} (\ref{HDWE}) generically
possesses more solutions than are the ones obtained like this, unless
$(\pi,\mathscr{L})$ is a (hyper)regular theory. In this case, the ELH
equations cover the HDW equations.

In \cite{v10}, we presented an higher derivative, field theoretic version of
the geometric HJ formalism \cite{c...06,c...09} in the case of an hyperregular
Lagrangian field theory. In the next section we generalize the constructions
and results of \cite{v10} to possibly singular Lagrangian field theories. The
concept of HJ diffiety will play a special role.

\section{Hamilton-Jacobi Subdiffieties of Euler--Lagrange
Equations\label{SecHJEL}}

In \cite{c...06,c...09,c...10}, Cari\~{n}ena, Gr\`{a}cia, Marmo,
Mart\'{\i}nez, Mu\~{n}oz-Lecanda, and Rom\'{a}n-Roy presented a geometric
formulation of the classical HJ theory of mechanical systems in both
Lagrangian and Hamiltonian settings. They also presented a generalized HJ
problem depending on the sole equations of motion, and not on the Lagrangian,
nor the Hamiltonian. Their formulation \textquotedblleft is based on the idea
of obtaining solutions of a second order differential equations by lifting
solutions of an adequate first order differential equation \cite{c...09}%
\textquotedblright. This idea can be generalized to the higher derivative,
regular, Lagrangian field theoretic setting \cite{v10}. We here propose a
further generalization to the (possibly) singular case.

Let $(\pi,\mathscr{L})$ be a $k$th-order Lagrangian field theory, and
$\omega\in\Lambda^{n+1}(\pi_{k+1,k}^{\circ}(J^{\dag}\pi_{k}))$ the associated
PD Hamiltonian system in $\pi_{k+1,k}^{\circ}(J^{\dag}\pi_{k})\longrightarrow
M$. A section of the pull-back bundle $\pi_{k+1,k}^{\circ}(J^{\dag}\pi
_{k})\longrightarrow J^{k}$ can be understood as a pair $(\nabla,T)$, where
$\nabla$ is an holonomic connection in $\pi_{k}$, and $T$ is a section of
$J^{\dag}\pi_{k}\longrightarrow J^{k}$, i.e., $T\in V\!\Lambda^{1}(J^{k},\pi
_{k})\otimes\Lambda_{n-1}^{n-1}(J^{k},\pi_{k})$. We will always adopt this
point of view. Obviously, the diagram
\[%
%TCIMACRO{\TeXButton{TeX field}{\xymatrix{ \pi_{k+1,k}^\circ(J^\dag\pi
%_k)  \ar[d] \ar[r] & J^\dag\pi_k  \\
%J^{k+1}  & J^k \ar[lu]_-{(\nabla, T)} \ar[l]^-{\nabla} \ar[u]_-T
%}}}%
%BeginExpansion
\xymatrix{ \pi_{k+1,k}^\circ(J^\dag\pi_k)  \ar[d] \ar[r] & J^\dag\pi_k  \\
J^{k+1}  & J^k \ar[lu]_-{(\nabla, T)} \ar[l]^-{\nabla} \ar[u]_-T
}%
%EndExpansion
\]
commutes.

\begin{lemma}
\label{Lemma2}Let $(\nabla,T)$ be a section of $\pi_{k+1,k}^{\circ}(J^{\dag
}\pi_{k})\longrightarrow J^{k}$, with $\nabla$ a flat (holonomic) connection.
Then, one has
\[
(\nabla,T)^{\ast}\omega=d(\nabla^{\ast}\mathscr{L}+e^{1,n-1}(\nabla)T).
\]

\end{lemma}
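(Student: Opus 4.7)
The plan is to mirror the proof of Lemma~\ref{Lemma1}: exhibit a local primitive $\varrho$ for $\omega$, then check the undifferentiated identity by a direct pull-back calculation and apply $d$ at the end.

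From the local expression of $\omega$ on $\pi_{k+1,k}^\circ(J^\dag\pi_k)$, the natural primitive is
\[
\varrho = \sum_{|I|{}\leq k} p_\alpha^{I.i}\, du_I^\alpha \wedge d^{n-1}x_i - E\, d^n x,
\]
and $d\varrho = \omega$ by inspection. Since $d$ commutes with pull-back, it suffices to establish the undifferentiated identity
\[
(\nabla, T)^*\varrho = \nabla^*\mathscr{L} + e^{1, n-1}(\nabla)\,T,
\]
from which the lemma follows by applying $d$.

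I would verify this by computing both sides in local coordinates and comparing. The section $(\nabla,T)$ pulls back $u_I^\alpha$ to itself, $p_\alpha^{I.i}$ to $T_\alpha^{I.i}$, and $u_{Ii}^\alpha$ to $\nabla_{Ii}^\alpha$. By holonomicity of $\nabla$, one has $\nabla_{Ii}^\alpha = u_{Ii}^\alpha$ when $|I|{}<k$, so the energy term $E = u_{Ii}^\alpha p_\alpha^{I.i} - L$ pulls back uniformly to $\sum_{|I|{}\leq k} \nabla_{Ii}^\alpha T_\alpha^{I.i} - L\circ\nabla$. On the right-hand side, clearly $\nabla^*\mathscr{L} = (L\circ\nabla)\, d^n x$, and the local description of $e^{1, n-1}(\nabla)$ embeds each $d^V u_I^\alpha$ as the contact form $du_I^\alpha - \nabla_{Ij}^\alpha dx^j$, so
\[
e^{1, n-1}(\nabla)\,T = T_\alpha^{I.i}\bigl(du_I^\alpha - \nabla_{Ij}^\alpha dx^j\bigr)\wedge d^{n-1}x_i.
\]
Using $dx^j\wedge d^{n-1}x_i = \delta_i^j\, d^n x$ and summing reproduces exactly the expression obtained for $(\nabla,T)^*\varrho$.

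The only real obstacle is bookkeeping: one must correctly write down the local form of the embedding $e^{1, n-1}(\nabla)$ for a higher-order holonomic connection, and reconcile the ranges $|I|{}<k$ and $|I|{}=k$ in the energy term via the holonomic relation $(\nabla_I^\alpha)_i = u_{Ii}^\alpha$ for $|I|{}<k$. It is worth noting that flatness of $\nabla$ does not appear to be needed in the identity itself; it is natural in the HJ context but any holonomic connection suffices for the proof.
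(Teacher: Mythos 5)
Your proof is correct, but it takes a genuinely different route from the paper's. The paper does not redo the coordinate computation: it factors $(\nabla,T)=\mathfrak{p}\circ T\circ\nabla_{[\infty]}$, where $\mathfrak{p}:J^{\dag}\pi_{\infty}\longrightarrow\pi_{k+1,k}^{\circ}(J^{\dag}\pi_{k})$ is the canonical projection (so that $\mathfrak{p}^{\ast}\omega=\omega_{\mathscr{L}}$), applies Lemma \ref{Lemma1} upstairs to get $T^{\ast}\omega_{\mathscr{L}}=d(\mathscr{L}+T)$, and then pulls back along $\nabla_{[\infty]}$, using that $\nabla_{[\infty]}^{\ast}$ is a morphism from the variational bicomplex to the bicomplex defined by $\nabla$ in order to convert $\mathscr{L}+T$ into $\nabla^{\ast}\mathscr{L}+e^{1,n-1}(\nabla)T$. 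Your direct computation on the finite-order space buys two things: it is self-contained (no appeal to Lemma \ref{Lemma1} or to the bicomplex-morphism property), and it makes visible that flatness of $\nabla$ plays no role in this identity --- an observation the paper's route cannot deliver, since $\nabla_{[\infty]}$ is only defined for flat holonomic connections. What the paper's argument buys in exchange is brevity and a conceptual link between the two PD Hamiltonian systems $\omega$ and $\omega_{\mathscr{L}}$. Your bookkeeping checks out: both sides of the undifferentiated identity reduce locally to $T_{\alpha}^{I.i}\,du_{I}^{\alpha}\wedge d^{n-1}x_{i}-\bigl(\sum_{|I|\leq k}\nabla_{Ii}^{\alpha}T_{\alpha}^{I.i}-\nabla^{\ast}L\bigr)d^{n}x$, the holonomic relation $(\nabla_{I}^{\alpha})_{i}=u_{Ii}^{\alpha}$ for $|I|<k$ making the pull-back of $E$ uniform, and $dx^{j}\wedge d^{n-1}x_{i}=\delta_{i}^{j}\,d^{n}x$ cancelling the horizontal part of $e^{1,n-1}(\nabla)T$ against the pulled-back energy; since both sides of the stated identity are globally defined, the local verification suffices, exactly as in the paper's own local proof of Lemma \ref{Lemma1}.
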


\begin{proof}
We can take the pull-back $\pi_{\infty,k}^{\ast}(T)\in\mathscr{C}\Lambda
^{1}\otimes\overline{\Lambda}{}^{n-1}$. In the following, abusing the
notation, we will indicate this pull-back by $T$ again. Denote by
$\mathfrak{p}:J^{\dag}\pi_{\infty}\longrightarrow\pi_{k+1,k}^{\circ}(J^{\dag
}\pi_{k})$ the canonical projection. Then 1) $(\nabla,T)=\mathfrak{p}\circ
T\circ\nabla_{\lbrack\infty]}$ (where, in the lhs we interpreted $T$ as a
section of $J^{\dag}\pi_{\infty}$), and 2) $\mathfrak{p}^{\ast}\omega
=\omega_{\mathscr{L}}$. Therefore,
\begin{align*}
(\nabla,T)^{\ast}\omega &  =\nabla_{\lbrack\infty]}^{\ast}T^{\ast}%
\mathfrak{p}^{\ast}\omega\\
&  =\nabla_{\lbrack\infty]}^{\ast}T^{\ast}\omega_{\mathscr{L}}\\
&  =\nabla_{\lbrack\infty]}^{\ast}d(\mathscr{L}+T)\\
&  =d\nabla_{\lbrack\infty]}^{\ast}(\mathscr{L}+T)\\
&  =d(\nabla^{\ast}\mathscr{L}+e^{1,n-1}(\nabla)T),
\end{align*}
where we used Lemma \ref{Lemma1} and, in the last line, the (obvious) fact
that $\nabla_{\lbrack\infty]}^{\ast}$ is a morphism of the variational
bicomplex and the bicomplex defined by $\nabla$.
\end{proof}

\begin{definition}
The \emph{generalized HJ problem} for the Lagrangian theory $(\pi
,\mathscr{L})$ consists in finding an holonomic flat connection $\nabla$ in
$\pi_{k}$ and a section $T$ of $J^{\dag}\pi_{k}\longrightarrow J^{k}$ such
that for every $\nabla$-constant section $\sigma$, $(\nabla,T)\circ\sigma$ is
a solution of the ELH equations.
\end{definition}

The relevance of the generalized HJ problem resides in the following

\begin{theorem}
\label{Prop1}Let $(\nabla,T)$ be a section of $\pi_{k+1,k}^{\circ}(J^{\dag}%
\pi_{k})\longrightarrow J^{k}$, with $\nabla$ a flat (holonomic) connection.
The following conditions are equivalent:

\begin{enumerate}
\item $(\nabla,T)$ is a solution of the generalized HJ problem;

\item $\operatorname{im}(\nabla,T)\subset\mathscr{P}$ and, for every $\nabla
$-constant section $j$, $T\circ j$ is a solution of the HDW equations;

\item $\operatorname{im}(\nabla,T)\subset\mathscr{P}$ (hence
$\operatorname{im}T\subset\mathscr{P}_{0}$) and $i^{1,n}(\nabla)T^{\ast}%
\omega_{0}=0$;

\item $\operatorname{im}(\nabla,T)\subset\mathscr{P}$ and $i^{1,n}%
(\nabla)(\nabla,T)^{\ast}\omega=0$.
\end{enumerate}

Moreover, each of the above conditions implies:

\begin{enumerate}
\item[5.] $\mathscr{Y}_{\infty}^{\nabla}$ is an (elementary) HJ subdiffiety of
$\mathscr{E}_{EL}$.
\end{enumerate}
\end{theorem}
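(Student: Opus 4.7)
The plan is to prove the four equivalences by establishing $1 \Leftrightarrow 4$, $3 \Leftrightarrow 4$, and $2 \Leftrightarrow 3$ separately, and then to deduce $1 \Rightarrow 5$ from the fact that solutions of the ELH equations on $\pi^\circ_{k+1,k}(J^\dag\pi_k)$ cover solutions of the EL equations. The guiding observation is Lemma \ref{Lemma2}, which rewrites the ELH equation for the composition $\sigma = (\nabla, T) \circ j_k s$ as an intrinsic condition on $J^k$ expressed through the bicomplex decomposition of $\Lambda(J^k)$ induced by the flat connection $\nabla$.

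For $1 \Leftrightarrow 4$, I fix a $\nabla$-constant section $s$ of $\pi$, set $\sigma = (\nabla, T) \circ j_k s$, and unpack $i^{1,n}(j_1\sigma)\omega|_\sigma = 0$ in local coordinates using (\ref{22}). The first block of (\ref{22}) is pointwise and is exactly the condition $(\nabla, T)(j_k s(x)) \in \mathscr{P}$. Because $j_k s$ is $\nabla$-constant, one has $\nabla \circ j_k s = j_{k+1} s$, so horizontal derivatives appearing in the remaining blocks of (\ref{22}) factor through $\nabla$, and those blocks collapse to $(j_k s)^* [i^{1,n}(\nabla)(\nabla, T)^*\omega] = 0$. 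Flatness of $\nabla$ guarantees a $\nabla$-constant section through every $\theta \in J^k$, so the requirement that both identities hold for all such $s$ is equivalent to the intrinsic conditions $\operatorname{im}(\nabla, T) \subset \mathscr{P}$ and $i^{1,n}(\nabla)(\nabla, T)^*\omega = 0$ on $J^k$, which is condition 4.

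For $3 \Leftrightarrow 4$, the assumption $\operatorname{im}(\nabla, T) \subset \mathscr{P}$ lets $(\nabla, T)$ factor through the inclusion $i_\mathscr{P}: \mathscr{P} \hookrightarrow \pi^\circ_{k+1,k}(J^\dag\pi_k)$, and composing with the canonical projection $\pi_\mathscr{P}: \mathscr{P} \to \mathscr{P}_0$ recovers $T: J^k \to \mathscr{P}_0$; combining this with the defining property $i_\mathscr{P}^*\omega = \pi_\mathscr{P}^*\omega_0$ yields $(\nabla, T)^*\omega = T^*\omega_0$, and applying $i^{1,n}(\nabla)$ gives the equivalence. For $2 \Leftrightarrow 3$, I repeat the argument of $1 \Leftrightarrow 4$ with $T$, $\omega_0$, and $\mathscr{P}_0$ in place of $(\nabla, T)$, $\omega$, and $\mathscr{P}$. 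Finally, for $1 \Rightarrow 5$, the analogue of Theorem \ref{Theorem1} indicated after (\ref{ELHEk}) tells us that the projection of a solution of (\ref{ELHEk}) to $J^k$ is the $k$-jet of a solution of the EL equations. Applied to $\sigma = (\nabla, T) \circ j_k s$, this forces every $\nabla$-constant $s$ to be a solution of the EL equations, so $\mathscr{Y}^\nabla_\infty \subset \mathscr{E}_{EL}$.

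The main obstacle is the decomposition in $1 \Leftrightarrow 4$: one needs to verify carefully that the local system (\ref{22}) separates cleanly into the algebraic constraint defining $\mathscr{P}$ and a dynamical condition that, thanks to Lemma \ref{Lemma2} and $\nabla$-constancy of $j_k s$, is entirely captured by the $(1,n)$-component of $(\nabla, T)^*\omega$ under the bicomplex splitting from Section \ref{SecNotConv}. The density of $\nabla$-constant sections in $J^k$ --- a consequence of flatness --- is then what upgrades the conditions along each $j_k s$ to pointwise conditions on $J^k$.
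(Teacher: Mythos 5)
Your decomposition of the ELH system (\ref{22}) along a $\nabla$-constant section into the algebraic block $I$ (the $\mathscr{P}$-constraint), the automatically satisfied block $III$, and the block $II$ captured by $i^{1,n}(\nabla)(\nabla,T)^{\ast}\omega$ via Lemma \ref{Lemma2} is exactly the content of the paper's implication $4\Rightarrow1$, and your treatment of $3\Leftrightarrow4$ and of $1\Rightarrow5$ also coincides with the paper's. The only organizational difference is that you prove pairwise equivalences where the paper runs the cycle $1\Rightarrow2\Rightarrow3\Rightarrow4\Rightarrow1$; that difference matters at exactly one point.

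The gap is in $3\Rightarrow2$ (i.e., in whichever implication is supposed to deliver condition 2 from the others). ``Repeating the argument of $1\Leftrightarrow4$ with $T$, $\omega_{0}$, $\mathscr{P}_{0}$'' only yields the direction $2\Rightarrow3$: the identity $i_{X}i^{1,n}(j_{1}j)T^{\ast}\omega_{0}|_{j}=i_{(dT)(X)}i^{1,n}(j_{1}(T\circ j))\omega_{0}|_{T\circ j}$ tests the HDW one-form only on vectors in the image of $dT$, which is a complement of the vertical bundle of $\mathscr{P}_{0}\rightarrow J^{k}$ inside the vertical bundle of $\mathscr{P}_{0}\rightarrow M$. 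The ``momentum-direction'' components of $i^{1,n}(j_{1}(T\circ j))\omega_{0}|_{T\circ j}$ (in the hyperregular case, the equations $u_{I}^{\alpha},_{i}=\partial H/\partial p_{\alpha}^{I.i}$) are therefore invisible to $T^{\ast}\omega_{0}$, and --- unlike block $III$ of (\ref{22}) --- they are \emph{not} automatic from $\nabla$-constancy of $j$: they hold only because $\operatorname{im}(\nabla,T)\subset\mathscr{P}$ forces the velocities $\nabla(j(x))$ to be Legendre-dual to the momenta $T(j(x))$. So the claimed analogy breaks precisely where the structure of the two PD Hamiltonian systems differs. The fix is either a direct verification that the $\mathscr{P}$-condition plus $\nabla$-constancy yields the momentum-direction HDW equations along $T\circ j$, or (simpler, and what the paper does) to obtain $3\Rightarrow2$ as $3\Rightarrow4\Rightarrow1\Rightarrow2$, invoking for the last step the fact recalled in Section \ref{SecLHFT} that the projection $\mathscr{P}\longrightarrow\mathscr{P}_{0}$ maps solutions of the ELH equations to solutions of the HDW equations. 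With that repair your argument is complete and essentially identical to the paper's.
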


\begin{proof}
1. $\Longrightarrow$ 2. Let $j$ be a $\nabla$-constant section. Then
$(\nabla,T)\circ j$ is a solution of the ELH equations and, therefore, takes
values in $\mathscr{P}$. Since $\nabla$-constant sections \textquotedblleft
foliate\textquotedblright\ $J^{k}$ we conclude that $(\nabla,T)$ itself takes
values in $\mathscr{P}$. Finally, the projection $\mathscr{P}\longrightarrow
\mathscr{P}_{0}$ maps solutions of the ELH equations to solutions of the HDW equations.

2. $\Longrightarrow$ 3. Let $j$ be a $\nabla$-constant section (and hence $T\circ
j$ is a solution of the HDW equations) and $X$ a $\pi_{k}$-vertical vector
field on $J^{k}$ along $j$. Compute
\[
i_{X}i^{1,n}(j_{1}j)T^{\ast}\omega_{0}|_{j}=i_{(dT)(X)}i^{1,n}(j_{1}(T\circ
j))\omega_{0}|_{T\circ j}=0.
\]
Since $X$ is arbitrary, $i^{1,n}(j_{1}j)T^{\ast}\omega_{0}|_{j}=0$. Moreover,
$\nabla$-constant sections \textquotedblleft foliate\textquotedblright%
\ $J^{k}$ and, therefore, $i^{1,n}(\nabla)T^{\ast}\omega_{0}=0$.

3. $\Longrightarrow$ 4. Suppose that $\operatorname{im}(\nabla,T)\subset\mathscr{P}$.
Then
\[
(\nabla,T)^{\ast}\omega=(\nabla,T)^{\ast}i_{\mathscr{P}}^{\ast}\omega=T^{\ast
}\omega_{0}.
\]

4. $\Longrightarrow$ 1. Suppose that $\operatorname{im}(\nabla,T)\subset\mathscr{P}$, with
$i^{1,n}(\nabla)(\nabla,T)^{\ast}\omega=0$, and let $j$ be a $\nabla$-constant
section. We prove that $(\nabla,T)\circ j$ is a solution of the ELH equations.
Indeed, in view of Lemma \ref{Lemma2},
\begin{align*}
0  &  =i^{1,n}(\nabla)(\nabla,T)^{\ast}\omega\\
&  =i^{1,n}(\nabla)d(\nabla^{\ast}\mathscr{L}+e^{1,n-1}(\nabla)T)\\
&  =d^{V}\!\nabla^{\ast}\mathscr{L}+\overline{d}_{\nabla}T.
\end{align*}
Now, locally,
\[
d^{V}\!\nabla^{\ast}\mathscr{L}+\overline{d}_{\nabla}T=\sum\nolimits_{|J|,|I|{}%
\leq k}\nabla^{\ast}\left(  \dfrac{\partial L}{\partial u_{I}^{\alpha}}%
-D_{i}T_{\alpha}^{I.i}-\delta_{Ji}^{I}T_{\alpha}^{J.i}\right)  d^{V}
\!u_{I}^{\alpha}\otimes d^{n}x.
\]
Thus,
\begin{equation}
\nabla^{\ast}\left(  \dfrac{\partial L}{\partial u_{I}^{\alpha}}%
-D_{i}T_{\alpha}^{I.i}-\delta_{Ji}^{I}T_{\alpha}^{J.i}\right)  =0. \label{8}%
\end{equation}
$\sigma:=(\nabla,T)\circ j$ satisfies Equations (\ref{22}) $I$ (because it
takes values in $\mathscr{P}$) and (\ref{22}) $III$ (because $j$ is $\nabla
$-constant). We show that it satisfies Eq.{} (\ref{22}) $II$ also. For
$|I|{}\leq k$
\begin{align*}
\sigma^{\ast}(p_{\alpha}^{I.i}),_{i}  &  =j^{\ast}(T_{\alpha}^{I.i})_{,i}\\
&  =(j_{1}j)^{\ast}D_{i}T_{\alpha}^{I.i}\\
&  =(\nabla\circ j)^{\ast}D_{i}T_{\alpha}^{I.i}\\
&  =j^{\ast}\nabla^{\ast}(D_{i}T_{\alpha}^{I.i})\\
&  =j^{\ast}\nabla^{\ast}\left(  \dfrac{\partial L}{\partial u_{I}^{\alpha}%
}-\delta_{Ji}^{I}T_{\alpha}^{J.i}\right) \\
&  =j^{\ast}(\nabla,T)^{\ast}\left(  \dfrac{\partial L}{\partial u_{I}%
^{\alpha}}-\delta_{Ji}^{I}p_{\alpha}^{J.i}\right) \\
&  =\sigma^{\ast}\left(  \dfrac{\partial L}{\partial u_{I}^{\alpha}}%
-\delta_{Ji}^{I}p_{\alpha}^{J.i}\right)  .
\end{align*}

1. $\Longrightarrow$ 5. Obvious, since the projection
$\mathscr{P}\longrightarrow E$ maps solutions of the ELH equations to
solutions of the EL equations.
\end{proof}

In view of the above theorem, given a solution $(\nabla,T)$ of the generalized
HJ problem we can obtain solutions of the ($2k$th-order) EL equations, finding
solutions of the much simpler ($k$th-order) equation $\mathscr{Y}^{\nabla}$.

We now prove that the last implication in the above proof can be inverted as
well in the following sense. If $\mathscr{Y}_{\infty}^{\nabla}$ is an HJ
subdiffiety of the EL equations then there exists $T$ such that $(\nabla,T)$
is a solution of the generalized HJ problem. This result is obtained by observing
the relation between the generalized HJ problem and Legendre forms.

\begin{theorem}
\label{13}Let $(\nabla,T)$ be a section of $\pi_{k+1,k}^{\circ}(J^{\dag}%
\pi_{k})\longrightarrow J^{k}$, with $\nabla$ a flat (holonomic) connection.
The following conditions are equivalent

\begin{enumerate}
\item $(\nabla,T)$ is a solution of the generalized HJ problem;

\item $(d^{V}\!\mathscr{L}+\overline{d}T)|_{\nabla_{\lbrack\infty]}}=0$;

\item \label{13,3}$\mathscr{Y}_{\infty}^{\nabla}$ is an (elementary) HJ
subdiffiety of $\mathscr{E}_{EL}$, and there exists a Legendre form
$\vartheta$ such that $(T-\vartheta)|_{\nabla_{\lbrack\infty]}}=0$.
\end{enumerate}
\end{theorem}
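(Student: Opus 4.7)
The plan is to establish the chain $1\Leftrightarrow 2\Leftrightarrow 3$, combining Lemma \ref{Lemma2} with Theorem \ref{Prop1} for $1\Leftrightarrow 2$, and exploiting the cohomological characterization of Euler--Lagrange operators from Remark \ref{Remark1} together with the freedom in choosing Legendre forms from Remark \ref{Remark2} for $2\Leftrightarrow 3$.

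For $1\Leftrightarrow 2$, I would start from Lemma \ref{Lemma2}, $(\nabla,T)^{\ast}\omega=d(\nabla^{\ast}\mathscr{L}+e^{1,n-1}(\nabla)T)$, apply $i^{1,n}(\nabla)$ to both sides, and use that $\nabla_{[\infty]}^{\ast}$ is a morphism of the ambient and $\nabla$-induced bicomplexes to identify $i^{1,n}(\nabla)(\nabla,T)^{\ast}\omega$ with $\nabla_{[\infty]}^{\ast}(d^{V}\mathscr{L}+\overline{d}T)=(d^{V}\mathscr{L}+\overline{d}T)|_{\nabla_{[\infty]}}$. A short local expansion, in the spirit of the one already carried out in the proof of Theorem \ref{Prop1}, then shows that the $|I|=k+1$ components of this form encode the first constraint $\operatorname{im}(\nabla,T)\subset\mathscr{P}$, while the $|I|\leq k$ components encode the ELH equations in the form $i^{1,n}(\nabla)(\nabla,T)^{\ast}\omega=0$. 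Hence condition 2 is exactly condition 4 of Theorem \ref{Prop1}, and the desired equivalence follows from that theorem.

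For $3\Rightarrow 2$: given a Legendre form $\vartheta$ with $(T-\vartheta)|_{\nabla_{[\infty]}}=0$ and $\mathscr{Y}_{\infty}^{\nabla}\subset\mathscr{E}_{EL}$, I would restrict the defining relation $\boldsymbol{E}(\mathscr{L})=d^{V}\mathscr{L}+\overline{d}\vartheta$ of Eq.\ (\ref{Legendre}) to $\nabla_{[\infty]}$: the left hand side vanishes by $\mathscr{Y}_{\infty}^{\nabla}\subset\mathscr{E}_{EL}$, and since $\overline{d}$ commutes with pullback, $(T-\vartheta)|_{\nabla_{[\infty]}}=0$ lets one replace $\vartheta$ by $T$, giving 2. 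For the harder $2\Rightarrow 3$: pick any Legendre form $\vartheta_{0}$. Then condition 2 yields $\boldsymbol{E}(\mathscr{L})|_{\nabla_{[\infty]}}=(d^{V}\mathscr{L}+\overline{d}\vartheta_{0})|_{\nabla_{[\infty]}}=\overline{d}(\vartheta_{0}-T)|_{\nabla_{[\infty]}}$, which is $\overline{d}|_{\mathscr{Y}_{\infty}^{\nabla}}$-exact. The isomorphism of Remark \ref{Remark1} then forces $\boldsymbol{E}(\mathscr{L})|_{\mathscr{Y}_{\infty}^{\nabla}}=0$, so $\mathscr{Y}_{\infty}^{\nabla}\subset\mathscr{E}_{EL}$. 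Combining this inclusion with condition 2 gives $\overline{d}(T-\vartheta_{0})|_{\nabla_{[\infty]}}=0$, and acyclicity of complex (\ref{ComplCL1j}) in the $(n-1)$th term on $\mathscr{Y}_{\infty}^{\nabla}$ produces $\mu\in\mathscr{C}\Lambda^{1}\otimes\overline{\Lambda}{}^{n-2}|_{\mathscr{Y}_{\infty}^{\nabla}}$ with $(T-\vartheta_{0})|_{\nabla_{[\infty]}}=\overline{d}\mu$. Extending $\mu$ locally to some $\widetilde{\mu}\in\mathscr{C}\Lambda^{1}\otimes\overline{\Lambda}{}^{n-2}$ on $J^{\infty}$ and setting $\vartheta:=\vartheta_{0}+\overline{d}\widetilde{\mu}$ furnishes a new Legendre form (by Remark \ref{Remark2}) satisfying $(T-\vartheta)|_{\nabla_{[\infty]}}=0$.

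The principal technical obstacle is this last step of $2\Rightarrow 3$: the combination of the cohomological isomorphism of Remark \ref{Remark1} with the local extension of $\mu$ from the submanifold $\mathscr{Y}_{\infty}^{\nabla}$ back to the ambient jet space $J^{\infty}$ is what allows $T$, a priori only a section of $J^{\dag}\pi_{k}\longrightarrow J^{k}$, to be realised (on $\mathscr{Y}_{\infty}^{\nabla}$) as the restriction of a genuine Legendre form; care with low-dimensional base cases ($n=1$) may be needed when invoking the acyclicity statement.
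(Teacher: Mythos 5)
Your architecture ($1\Leftrightarrow 2$ via condition 4 of Theorem \ref{Prop1}, then $2\Leftrightarrow 3$ cohomologically) is genuinely different from the paper's, which runs the cycle $1\Rightarrow 2\Rightarrow 3\Rightarrow 1$: the paper proves $1\Rightarrow 2$ by passing to the ELH equations on $J^{\dag}\pi_{\infty}$ via Lemma \ref{Lemma1} and a vertical-vector-field argument, and closes the loop with $3\Rightarrow 1$ by invoking Theorem \ref{Theorem1} (solutions of the ELH equations are exactly the $\vartheta\circ j_{\infty}s$), never proving $2\Rightarrow 1$ directly. Your $2\Rightarrow 3$ coincides with the paper's almost verbatim (including the use of Remark \ref{Remark1} and the correction $\vartheta=\vartheta_{0}+\overline{d}\widetilde{\mu}$), and your direct $3\Rightarrow 2$ is a clean shortcut that avoids Theorem \ref{Theorem1} altogether.

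There is, however, one step that is wrong as written and sits exactly where the argument is most delicate: the identification $i^{1,n}(\nabla)(\nabla,T)^{\ast}\omega=(d^{V}\!\mathscr{L}+\overline{d}T)|_{\nabla_{[\infty]}}$. The left-hand side is the \emph{pullback} $\nabla_{[\infty]}^{\ast}(d^{V}\!\mathscr{L}+\overline{d}T)$, living in the $\nabla$-bicomplex on $J^{k}$, where each $d^{V}\!u_{K}^{\alpha}$ with $|K|=k+1$ collapses to $d^{V}\!(\nabla_{K}u^{\alpha})$, i.e.\ to a combination of the $d^{V}\!u_{I}^{\alpha}$ with $|I|\leq k$. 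The right-hand side is the \emph{restriction} in the sense of $F^{\circ}$ fixed in Section \ref{SecNotConv}, where the $d^{V}\!u_{K}^{\alpha}|_{\nabla_{[\infty]}}$ remain independent generators. Vanishing of the pullback is strictly weaker than condition 2: for $L=\tfrac{1}{2}u_{t}^{2}$, $n=m=1$, $k=0$, the pair $\nabla_{t}^{\ast}(u_{t})=u$, $T=\tfrac{u}{2}\,d^{V}\!u$ satisfies $i^{1,n}(\nabla)(\nabla,T)^{\ast}\omega=0$ but violates the constraint $\operatorname{im}(\nabla,T)\subset\mathscr{P}$ and is not a solution of the generalized HJ problem. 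Fortunately your subsequent local expansion is the correct statement and does not need the false identification: in the restricted module the $|K|=k+1$ components of $(d^{V}\!\mathscr{L}+\overline{d}T)|_{\nabla_{[\infty]}}$ give exactly $\operatorname{im}(\nabla,T)\subset\mathscr{P}$, and \emph{given that constraint} the remaining $|I|\leq k$ components coincide with $i^{1,n}(\nabla)(\nabla,T)^{\ast}\omega$ (this is the paper's Eq.\ (\ref{8}); without the constraint the two differ by mixing terms proportional to $\partial(\nabla_{K}u^{\beta})/\partial u_{I}^{\alpha}$). With that correction your $2\Leftrightarrow 4$, and hence the whole proof, goes through.
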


\begin{proof}
1. $\Longrightarrow$ 2. Recall, preliminarily, that, in view of Lemma
\ref{Lemma1}, $T^{\ast}\omega_{\mathscr{L}}=d(\mathscr{L}+T)=d^{V}\!%
\mathscr{L}+\overline{d}T+d^{V}\!T$. Now let $j$ be a $\nabla$-constant section.
Then $(\nabla,T)\circ j$ is a solution of the ELH equations on $\pi
_{k+1,k}^{\circ}(J^{\dag}\pi_{k})$. Coordinate formulas then show that
$T\circ\nabla_{\lbrack\infty]}\circ j$ is a solution of the ELH equations on
$J^{\dag}\pi_{\infty}$, i.e.,
\[
i^{1,n}(j_{1}(T\circ\nabla_{\lbrack\infty]}\circ j))\omega_{\mathscr{L}}%
|_{T\circ\nabla_{\lbrack\infty]}\circ j}=0.
\]
Let $X$ be a $\pi_{\infty}$-vertical vector field over $J^{\infty}$ along
$\nabla_{\lbrack\infty]}\circ j$. Then
\[
i_{X}i^{1,n}(j_{1}(\nabla_{\lbrack\infty]}\circ j))T^{\ast}\omega
_{\mathscr{L}}|_{\nabla_{\lbrack\infty]}\circ j}=i_{(dT)(X)}i^{1,n}%
(j_{1}(T\circ\nabla_{\lbrack\infty]}\circ j))\omega_{\mathscr{L}}%
|_{T\circ\nabla_{\lbrack\infty]}\circ j}=0.
\]
Since $X$ is arbitrary
\begin{align*}
0  &  =i^{1,n}(j_{1}(\nabla_{\lbrack\infty]}\circ j))T^{\ast}\omega
_{\mathscr{L}}|_{\nabla_{\lbrack\infty]}\circ j}\\
&  =i^{1,n}(j_{1}(\nabla_{\lbrack\infty]}\circ j))(d^{V}\!\mathscr{L}+\overline
{d}T+d^{V}\!T)|_{\nabla_{\lbrack\infty]}\circ j}\\
&  =[i^{1,n}(\mathscr{C})(d^{V}\!\mathscr{L}+\overline{d}T+d^{V}\!T)]|_{\nabla
_{\lbrack\infty]}\circ j}\\
&  =(d^{V}\!\mathscr{L}+\overline{d}T)|_{\nabla_{\lbrack\infty]}\circ j}.
\end{align*}
Since $\nabla$-constant sections foliate $J^{k}$, we get $(d^{V}\!%
\mathscr{L}+\overline{d}T)|_{\nabla_{\lbrack\infty]}}=0$.

2. $\Longrightarrow$ 3. $0=(d^{V}\!\mathscr{L}+\overline{d}T)|_{\nabla
_{\lbrack\infty]}}=(\boldsymbol{E}(\mathscr{L})+\overline{d}(T-\vartheta
_{0}))|_{\nabla_{\lbrack\infty]}}$, where $\vartheta_{0}$ is a Legendre form.
Therefore $\boldsymbol{E}(\mathscr{L})|_{\nabla_{\lbrack\infty]}}=\overline
{d}(\vartheta_{0}-T)|_{\nabla_{\lbrack\infty]}}=\overline{d}|_{\nabla_{\infty
}}(\vartheta_{0}-T)|_{\nabla_{\infty}}$, where we used that
$\mathscr{Y}_{\infty}^{\nabla}\subset J^{\infty}$ is a subdiffiety. Recall
that, in view of Remark \ref{Remark1}, $\boldsymbol{E}(\mathscr{L})|_{\nabla
_{\lbrack\infty]}}$ cannot be $\overline{d}|_{\nabla_{\lbrack\infty]}}$-exact
unless it is $0$. We conclude that $\mathscr{Y}_{\infty}^{\nabla}%
\subset\mathscr{E}_{EL}$. Moreover, $(\vartheta_{0}-T)|_{\nabla_{\lbrack
\infty]}}$ is $\overline{d}|_{\nabla_{\lbrack\infty]}}$-closed, and hence
$\overline{d}|_{\nabla_{\lbrack\infty]}}$-exact, i.e., there exists $\nu
\in\mathscr{C}\Lambda^{1}\otimes\overline{\Lambda}{}^{n-2}$ such that
$(\vartheta_{0}-T)|_{\nabla_{\lbrack\infty]}}=\overline{d}|_{\nabla
_{\lbrack\infty]}}\nu|_{\nabla_{\lbrack\infty]}}$ or, which is the same,
$(T-\vartheta)|_{\nabla_{\lbrack\infty]}}=0$ where we put $\vartheta
=\vartheta_{0}-\overline{d}\nu$. Finally, notice that $\vartheta$ itself is a
Legendre form.

3. $\Longrightarrow$ 1. Let $j$ be a $\nabla$-constant section. Then
$j=j_{k}s$ for some solution of the EL equations and $T\circ j_{\infty
}s=\vartheta\circ j_{\infty}s$. In view of Theorem \ref{Theorem1}, $T\circ
j_{\infty}s$ is a solution of the ELH equations on $J^{\dag}\pi_{\infty}$. We
conclude that
\[
\pi_{k+1,k}^{\ast}(T)\circ j_{k+1}s=\pi_{k+1,k}^{\ast}(T)\circ\nabla\circ
j_{k}s=(\nabla,T)\circ j_{k}s=(\nabla,T)\circ j
\]
is a solution of the ELH equations on $\pi_{k+1,k}^{\circ}(J^{\dag}\pi_{k})$.
\end{proof}

\begin{corollary}
Let $\nabla$ be an holonomic, flat connection in $\pi_{k}$. There exists $T$
such that $(\nabla,T)$ is a solution of the generalized HJ problem iff
$\mathscr{Y}_{\infty}^{\nabla}$ is an (elementary) HJ subdiffiety of
$\mathscr{E}_{EL}$.
\end{corollary}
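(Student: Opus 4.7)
The plan is to read both implications off Theorem~\ref{13}, so that the only substantive task is producing the required $T$ in the nontrivial direction.

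The forward implication is immediate: if $(\nabla,T)$ solves the generalized HJ problem, then condition~3 of Theorem~\ref{13} (equivalently, condition~5 of Theorem~\ref{Prop1}) asserts directly that $\mathscr{Y}_{\infty}^{\nabla}$ is an elementary HJ subdiffiety of $\mathscr{E}_{EL}$.

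For the converse, my strategy is to construct $T$ explicitly from a carefully chosen Legendre form. Starting from the hypothesis $\mathscr{Y}_{\infty}^{\nabla}\subset\mathscr{E}_{EL}$, I first invoke Theorem~\ref{Theorem0} to obtain a Legendre form $\vartheta\in\mathscr{C}\Lambda^{1}\otimes\overline{\Lambda}{}^{n-1}$ for $\mathscr{L}$. Since $\mathscr{L}\in\Lambda(J^{k+1}\pi)$, the last sentence of Remark~\ref{Remark2} lets me arrange that only vertical derivatives of functions on $J^{k}\pi$ appear in $\vartheta$; such a $\vartheta$ is then precisely a section of $J^{\dag}\pi_{k}\longrightarrow J^{k}$, so I set $T:=\vartheta$. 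Then $(T-\vartheta)|_{\nabla_{[\infty]}}=0$ trivially, and, combined with the hypothesis, this supplies condition~3 of Theorem~\ref{13}; the implication $3\Longrightarrow 1$ of that theorem completes the argument.

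The main (mild) obstacle is the jet-order reduction of the Legendre form: one needs it as a section over $J^{k}$, not merely over $J^{\infty}$, for it to serve as a $T$ in the generalized HJ problem. Remark~\ref{Remark2} provides exactly this reduction, after which the corollary is a bookkeeping invocation of Theorem~\ref{13}.
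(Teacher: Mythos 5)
Your forward direction is fine, and your overall strategy for the converse (build $T$ from a Legendre form of reduced order and invoke Theorem~\ref{13}) matches the paper's. However, there is a genuine gap at the key step: you claim that a Legendre form $\vartheta$ ``containing vertical derivatives of functions on $J^{k}\pi$ only'' (Remark~\ref{Remark2}) ``is then precisely a section of $J^{\dag}\pi_{k}\longrightarrow J^{k}$,'' and set $T:=\vartheta$. This is false. The reduction in Remark~\ref{Remark2} controls only the factors $d^{V}\!u_{I}^{\alpha}$ (it guarantees $|I|\leq k$); the \emph{coefficients} of $\vartheta$ remain functions on a higher jet space. Indeed, from $\boldsymbol{E}(\mathscr{L})-d^{V}\!\mathscr{L}=\overline{d}\vartheta$ with $\boldsymbol{E}(\mathscr{L})$ generically of order $2(k+1)$, the momenta of $\vartheta$ are generically of order $2k+1$ (cf.\ the local formula $\vartheta_{\mathrm{loc}}$, whose coefficients $D_{J}\,\partial L/\partial u_{IJi}^{\alpha}$ involve total derivatives of functions on $J^{k+1}$). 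So $\vartheta$ is a section of $J^{\dag}\pi_{k}$ pulled back over $J^{\infty}$, not a section over $J^{k}$, and your $T$ is not an admissible datum for the generalized HJ problem, which explicitly requires $T\in V\!\Lambda^{1}(J^{k},\pi_{k})\otimes\Lambda_{n-1}^{n-1}(J^{k},\pi_{k})$. You correctly identified the jet-order reduction as the main obstacle, but Remark~\ref{Remark2} does not deliver it.

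The missing idea --- and the one the paper uses --- is to restrict the coefficients along the embedding $\nabla_{[\infty]}:J^{k}\longrightarrow J^{\infty}$: set $T:=\vartheta|_{\nabla_{[\infty]}}$. This $T$ is a genuine section of $J^{\dag}\pi_{k}\longrightarrow J^{k}$ (the vertical part is unaffected because $\vartheta$ was already reduced there, and the coefficients become functions on $J^{k}$), and since $\nabla_{[\infty]}$ is a section of $\pi_{\infty,k}$ one still has $(T-\vartheta)|_{\nabla_{[\infty]}}=0$; your appeal to condition~3 of Theorem~\ref{13} then goes through. (The paper instead verifies condition~2 by computing $(d^{V}\!\mathscr{L}+\overline{d}T)|_{\nabla_{[\infty]}}=(\boldsymbol{E}(\mathscr{L})+\overline{d}(T-\vartheta))|_{\nabla_{[\infty]}}=0$, which is an equivalent route.) With this one correction your argument coincides with the paper's.
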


\begin{proof}
The if implication is already stated in Theorem \ref{Prop1}, point 5.
Conversely, if $\mathscr{Y}_{\infty}^{\nabla}\subset\mathscr{E}_{EL}$, then
$\boldsymbol{E}(\mathscr{L})|_{\mathscr{\nabla}_{[\infty]}}=0$. Let
$\vartheta$ be a Legendre form depending only on (vertical differentials of)
derivatives up to the order $k$. Put $T:=\vartheta|_{\nabla_{\lbrack\infty]}}%
$. $T$ is a section of $J^{\dag}\pi_{k}\longrightarrow J^{k}$. Moreover,
\[
(d^{V}\!\mathscr{L}+\overline{d}T)|_{\nabla_{\lbrack\infty]}}=(\boldsymbol{E}%
(\mathscr{L})-\overline{d}\vartheta+\overline{d}T)|_{\mathscr{\nabla}_{[\infty
]}}=0.
\]
Now, use Theorem \ref{13}.
\end{proof}

The above corollary shows that equation $i^{1,n}(\nabla)(\nabla,T)^{\ast
}(\omega)=0$ covers the generalized ($k$th-order) HJ equation of the EL
equations. Since all the Legendre forms of a given Lagrangian density are
known, it follows that solving the generalized HJ problem is basically
equivalent to finding all ($k$th-order) HJ subdiffieties of the EL equations.

\begin{corollary}
Let $\mathscr{L}$ and $\mathscr{L}^{\prime}$ be Lagrangian densities (of the
same order) determining the same action functional (i.e., $\mathscr{L}^{\prime
}=$ $\mathscr{L}+\overline{d}\eta$ for some $\eta\in\overline{\Lambda}{}%
^{n-1}$), and $\mathcal{P}$ and $\mathcal{P}^{\prime}$ the corresponding
generalized HJ\ problems. Then $(\nabla,T)$ is a solution of $\mathcal{P}$ iff
$(\nabla,T-d^{V}\!\eta)$ is a solution of $\mathcal{P}^{\prime}$.
\end{corollary}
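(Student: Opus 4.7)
I would prove this by invoking Theorem \ref{13}: its condition 2 says that $(\nabla, T)$ solves the generalized HJ problem for a Lagrangian density $\mathscr{L}$ if and only if
\[
(d^{V}\!\mathscr{L} + \overline{d} T)|_{\nabla_{[\infty]}} = 0.
\]
Applying this criterion to both $\mathcal{P}$ and $\mathcal{P}'$ reduces the corollary to the vanishing, along $\nabla_{[\infty]}$, of the difference between $d^{V}\!\mathscr{L}' + \overline{d}(T - d^{V}\!\eta)$ and $d^{V}\!\mathscr{L} + \overline{d} T$.

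Expanding with $\mathscr{L}' = \mathscr{L} + \overline{d}\eta$ produces exactly the correction term $d^{V}\overline{d}\eta - \overline{d} d^{V}\!\eta$, which is controlled by the (anti)commutation of $d^{V}$ and $\overline{d}$ in the variational bicomplex $(\mathscr{C}^\bullet\Lambda\otimes\overline{\Lambda},\overline{d},d^{V})$ on $J^\infty$. This is the very same bicomplex identity that underlies the Legendre-form shift $\vartheta \mapsto \vartheta + d^{V}\!\eta$ recorded in Remark \ref{Remark2}, so once the signs are checked in the relevant bidegree the correction collapses and the forward direction of the corollary follows.

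The converse is then essentially automatic: the setup is symmetric under the involution $(\mathscr{L},\eta,T) \leftrightarrow (\mathscr{L}',-\eta,T-d^{V}\!\eta)$, which swaps the roles of $\mathcal{P}$ and $\mathcal{P}'$. An alternative, more conceptual route that I would keep in reserve uses Theorem \ref{13}, condition \ref{13,3}: since $\mathscr{L}$ and $\mathscr{L}'$ are cohomologous one has $\mathscr{E}_{EL} = \mathscr{E}_{EL'}$, and Remark \ref{Remark2} identifies the Legendre forms of $\mathscr{L}'$ with those of $\mathscr{L}$ up to a $d^{V}$-exact correction; hence $T|_{\nabla_{[\infty]}}$ is a Legendre form for $\mathscr{L}$ if and only if $(T - d^{V}\!\eta)|_{\nabla_{[\infty]}}$ is a Legendre form for $\mathscr{L}'$. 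The only real content is thus the sign bookkeeping in the bicomplex, which is the single place where care is needed; everything else is formal and already encoded in Theorem \ref{13} and Remark \ref{Remark2}.
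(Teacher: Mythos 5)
Your proposal is correct in substance, but your primary route differs from the paper's. The paper proves the corollary directly from Theorem \ref{13}, point \ref{13,3}: by Remark \ref{Remark2} the Legendre forms of $\mathscr{L}'=\mathscr{L}+\overline{d}\eta$ are exactly those of $\mathscr{L}$ shifted by $d^{V}\!\eta$, and since $\mathscr{E}_{EL}=\mathscr{E}_{EL'}$, the condition ``$(T-\vartheta)|_{\nabla_{[\infty]}}=0$ for some Legendre form $\vartheta$'' transforms verbatim under $T\mapsto T\mp d^{V}\!\eta$, $\vartheta\mapsto\vartheta\pm d^{V}\!\eta$ --- this is precisely the argument you keep ``in reserve,'' and it is a one-line algebraic identity with no differential computation at all. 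Your primary route, via condition 2 of Theorem \ref{13}, is a legitimate and arguably more direct alternative, but note that the place where you say ``the correction collapses'' is exactly where all the content sits: the leftover term is $d^{V}\overline{d}\eta-\overline{d}d^{V}\!\eta$, which vanishes only if $d^{V}$ and $\overline{d}$ commute in this bidegree, whereas the convention implicit in Remark \ref{Remark2} (where $\vartheta+d^{V}\!\varrho$ is a Legendre form for $\mathscr{L}+\overline{d}\varrho$, i.e.\ $d^{V}\overline{d}=-\overline{d}d^{V}$) makes the leftover $-2\,\overline{d}d^{V}\!\eta$, whose restriction to $\nabla_{[\infty]}$ need not vanish; under that convention the consistent statement is obtained with $T+d^{V}\!\eta$. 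So the sign in the corollary is genuinely tied to the bicomplex convention, and you should resolve it rather than defer it --- the paper's route has the advantage that it localizes this bookkeeping entirely in Remark \ref{Remark2}, while yours re-derives it inside the differential identity. Either way, the equivalence of the two problems follows, and your symmetry observation for the converse is fine.
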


\begin{proof}
Let $\vartheta$ be a Legendre form for $\mathscr{L}$. Then $\vartheta^{\prime
}:=\vartheta+d^{V}\!\eta$ is a Legendre form for $\mathscr{L}^{\prime}$. Since,
for $T\in\mathscr{C}\Lambda^{1}\otimes\Lambda^{n-1}$, $T-\vartheta=T-d^{V}\!%
\eta+\vartheta^{\prime}$, trivially, the assertion immediately follows from
Theorem \ref{13}, point \ref{13,3}.
\end{proof}

The above corollary basically states that \emph{the generalized HJ problem is
independent of the Lagrangian density in the class of those determining the
same action functional}.

\begin{remark}
Recall the example in the introduction. Among HJ subdiffieties of a system of
regular, ordinary, $2$nd-order EL equations there are distinguished ones:
namely, those determined (as in the introduction) by solutions of the standard
HJ Eq.{} (\ref{14}). It is well-known that, in its turn, Eq.{}
(\ref{14}) can be geometrically interpreted as follows. Consider the map
\[
T:\mathbb{R}\times\mathbb{R}^{n}\in(t,x)\longmapsto(t,T_{i}(t,x)dx^{i}%
)\in\mathbb{R}\times T^{\ast}\mathbb{R}^{n}.
\]
Then
\[
T_{i}=\dfrac{\partial S}{\partial x^{i}},
\]
with $S=S(x,t)$ a solution of (\ref{14}), iff $\operatorname{im}T$ is an
isotropic submanifold with respect to the presymplectic structure
\[
\Omega_{0}:=dp_{i}\wedge dx^{i}-dH\wedge dt
\]
on $\mathbb{R}\times T^{\ast}\mathbb{R}^{n}$, i.e.,
\begin{equation}
T^{\ast}\Omega_{0}=0. \label{23}%
\end{equation}
It is natural to wonder whether these considerations can be generalized to the
field theoretic setting. A first guess would be to consider the equation
\begin{equation}
(\nabla,T)^{\ast}\omega=T^{\ast}\omega_{0}=0,\quad\operatorname{im}%
(\nabla,T)\subset\mathscr{P} \label{HJE2}%
\end{equation}
as the natural field theoretic generalization of (\ref{23}). However, we think
that this point of view (which is taken in \cite{v10}; see also \cite{dL...08}%
) is not completely satisfactory for the following reasons.

Let $\vartheta$ be a Legendre form. Consider
\[
\Omega:=i_{\mathscr{E}_{EL}}^{\ast}(d^{V}\!\vartheta)\in\mathscr{C}\Lambda
^{2}(\mathscr{E}_{EL})\otimes\overline{\Lambda}{}^{n-1}(\mathscr{E}_{EL}).
\]
In view of the first variation formula (\ref{Legendre}), $\overline{d}\Omega
=0$. The horizontal cohomology class
\[
\boldsymbol{\omega}:=[\Omega]\in H^{n-1}(\mathscr{C}\Lambda^{2}%
(\mathscr{E}_{EL})\otimes\overline{\Lambda}(\mathscr{E}_{EL}),\overline{d})
\]
does only depend on the action functional and it is naturally interpreted as a
(pre)symplectic form in the so called covariant phase space, i.e., the space
of solutions of the EL equations (see, for instance, \cite{v09} for details).
This functional symplectic structure should be understood as the fundamental
symplectic structure in field theory. For instance, it is at the basis of the
BV formalism \cite{ht92}.

Now, let $(\nabla,T)$ be a solution of Eq.{} (\ref{HJE2}) and $\vartheta$ a
Legendre form such that $(T-\vartheta)|_{\nabla_{\lbrack\infty]}}=0$. It is
easy to see that $d^{V}\!T=0$. Therefore,
\[
\nabla_{\lbrack\infty]}^{\ast}\boldsymbol{\omega}=\nabla_{\lbrack\infty
]}^{\ast}[\Omega]=[d^{V}\!\nabla_{\lbrack\infty]}^{\ast}\vartheta]=[d^{V}\!T]=0,
\]
where the last two are cohomology classes in $H^{n-1}(V\!\Lambda^{2}(J^{k}%
,\pi_{k})\otimes\Lambda_{\bullet}^{\bullet}(J^{k},\pi_{k}),d^{V})$. We
conclude that $\mathscr{Y}_{\infty}^{\nabla}$ is an isotropic subdiffiety of
the \textquotedblleft(pre)symplectic diffiety\textquotedblright%
\ $(\mathscr{E}_{EL},\boldsymbol{\omega})$. Unfortunately, this is not a
special feature of solutions of (\ref{HJE2}). Namely, for $n>1$, \emph{every}
finite dimensional subdiffiety of $(\mathscr{E}_{EL},\boldsymbol{\omega})$ is
isotropic, modulo topological obstructions. Indeed, the horizontal de Rham
complex of a finite dimensional diffiety is locally acyclic in positive degree
(see, for instance, the final comment of Section \ref{SecNotConv}). On the
other hand, for $n=1$, solutions of (\ref{HJE2}) may be effectively
carachterized as those solutions of the generalized HJ problem defining
isotropic subdiffieties of $(\mathscr{E}_{EL},\boldsymbol{\omega})$. Because
of this difference between the $n>1$ and the $n=1$ cases, we think that
Eq.{} (\ref{HJE2}) is not as fundamental in field theory as Equation
(\ref{23}) in mechanics and there possibly exists a different, more
fundamental, field theoretic version of Eq.{} (\ref{23}). This possibility
will be explored elsewhere.
\end{remark}

\section{A Final Example\label{SecFinEx}}

The \textquotedblleft good\textquotedblright\ Boussinesq equation
\[
u_{tt}=(u+u^{2}+u_{xx})_{xx}.
\]
is obviously covered by the system of evolutionary equations
\begin{equation}
\left\{
\begin{array}
[c]{l}%
v_{t}=u+u^{2}+u_{xx}\\
u_{t}=v_{xx}%
\end{array}
\right.  ,\label{17}%
\end{equation}
which can be understood as a submanifold in $J^{2}\pi$ with $\pi$ the trivial
bundle $\pi:\mathbb{R}^{4}\ni(t,x,u,v)\longmapsto(t,x)\in\mathbb{R}^{2}$.
Further, Equations (\ref{17}) are the EL equations determined by the action
$\int Ldtdx$ with \cite{ch08}%
\[
L=\dfrac{1}{2}(u_{x}^{2}+v_{x}^{2}+vu_{t}-uv_{t})+\dfrac{1}{3}u^{3}+\dfrac
{1}{2}u^{2}\in C^{\infty}(J^{1}\pi).
\]
The Lagrangian density $L$ is singular in the sense that
\[
\det\left(
\begin{array}
[c]{cc}%
\dfrac{\partial L}{\partial u_{i}\partial u_{j}} & \dfrac{\partial L}{\partial
u_{i}\partial v_{j}}\\
\dfrac{\partial L}{\partial v_{i}\partial u_{j}} & \dfrac{\partial L}{\partial
v_{i}\partial v_{j}}%
\end{array}
\right)  _{i,j=t,x}=\det\left(
\begin{array}
[c]{cccc}%
0 & 0 & 0 & 0\\
0 & 1 & 0 & 0\\
0 & 0 & 0 & 0\\
0 & 0 & 0 & 1
\end{array}
\right)  =0.
\]
Denote by $p^{x},p^{t},q^{x},q^{t}$ momentum coordinates in $J^{\dag}\pi$
associated with $u,v$ respectively. Then
\[
\omega=-dp^{x}\wedge du\wedge dt+dp^{t}\wedge du\wedge dx-dq^{x}\wedge
dv\!\wedge dt+dq^{t}\wedge dv\!\wedge dx-dH\wedge dt\wedge dx,
\]
with%
\[
H=\dfrac{1}{2}(u_{x}^{2}+v_{x}^{2})-\dfrac{1}{3}u^{3}-\dfrac{1}{2}u^{2}.
\]
It follows that
\[
\mathscr{P}:\left\{
\begin{array}
[c]{l}%
p^{x}-u_{x}=0\\
p^{t}+\dfrac{1}{2}v=0\\
q^{x}-v_{x}=0\\
q^{t}-\dfrac{1}{2}u=0
\end{array}
\right.  ,
\]
and
\[
\mathscr{P}_{0}:\left\{
\begin{array}
[c]{l}%
p^{t}+\dfrac{1}{2}v=0\\
q^{t}-\dfrac{1}{2}u=0
\end{array}
\right.
\]
Consequently, fibers of $\mathscr{P}_{0}\longrightarrow J^{0}\pi
=\mathbb{R}^{4}$ are coordinatized by $p^{x},p^{t}$ only and
\[
\omega_{0}=du\wedge dv\!\wedge dx-dp^{x}\wedge du\wedge dt-dq^{x}\wedge
dv\!\wedge dt-dH_{0}\wedge dt\wedge dx,
\]
with
\[
H_{0}=\dfrac{1}{2}[(p^{x})^{2}+(q^{x})^{2}]-\dfrac{1}{6}u^{3}-\dfrac{1}%
{2}u^{2}%
\]
Let
\[
T=-P^{x}d^{V}\!u\otimes dt+P^{t}d^{V}\!u\otimes dx-Q^{x}d^{V}\!v\otimes
dt+Q^{t}d^{V}\!v\otimes dx
\]
be a section of $J^{\dag}\pi\longrightarrow J^{0}\pi$ and
\[
\nabla=(du-Adx-Bdt)\otimes\dfrac{\partial}{\partial u}+(dv-Cdx-Ddt)\otimes
\dfrac{\partial}{\partial v}%
\]
be a connection in $\pi$. $\nabla$ is flat iff
\[
\nabla_{t}A=\nabla_{x}B,\quad\text{and\quad}\nabla_{t}C=\nabla_{x}D,
\]
where $\nabla_{t}=\partial/\partial t+B\partial/\partial u+D\partial/\partial
v$ and $\nabla_{x}=\partial/\partial x+A\partial/\partial u+C\partial/\partial
v$. Moreover, $\operatorname{im}(\nabla,T)\subset\mathscr{P}$ iff
\[
P^{x}=A,\text{\quad}P^{t}=-\dfrac{1}{2}v,\quad Q^{x}=C,\quad\text{and\quad
}Q^{t}=\dfrac{1}{2}u.
\]
In this case $\operatorname{im}T\subset\mathscr{P}_{0}$ and
\begin{align*}
(\nabla,T)^{\ast}(\omega) &  =T^{\ast}(\omega_{0})\\
&  =du\wedge dv\!\wedge dx+(A_{v}-C_{u})du\wedge dv\!\wedge Dt\\
&  -(A_{x}+AA_{u}+CC_{u}-u^{2}-u)du\wedge dt\wedge dx\\
&  -(C_{x}+AA_{v}+CC_{v})dv\!\wedge dt\wedge dx.
\end{align*}
Notice that there is no $T$ such that $T^{\ast}(\omega_{0})=0$. Finally
\[
i^{1,n}(\nabla)T^{\ast}(\omega_{0})=[(D-\nabla_{x}A+u^{2}+u)d^{V}
\!u-(B+\nabla_{x}C)d^{V}\!v]\otimes dt\wedge dx,
\]
so $i^{1,n}(\nabla)T^{\ast}(\omega_{0})=0$ iff
\begin{equation}
\left\{
\begin{array}
[c]{l}%
D-\nabla_{x}A+u^{2}+u=0\\
B+\nabla_{x}C=0
\end{array}
\right.  ,\label{18}%
\end{equation}
which are precisely the $0$th generalized HJ equations for (\ref{17}). Let us
search for solutions of the form
\[
A=A(u),\quad C=C(u),\quad B=-cA,\quad D=-cC,\quad c=\mathrm{const}.
\]
Notice that, in these hypotheses, $\nabla$ is identically flat. Equations
(\ref{18}) reduce to
\begin{align*}
AA_{u} &  =cC+u^{2}+u\\
AC_{u} &  =-cA
\end{align*}
and for $A\neq0$ we find
\[
C=-cu+a,\quad A^{2}=\dfrac{1}{3}u^{3}+\dfrac{1}{2}(1-c^{2})u^{2}+au+b,
\]
$a,b$ being integration constants. We conclude that a (local) solution of the
generalized HJ problem is $(\nabla,T)$ with
\begin{gather*}
A=\sqrt{\dfrac{1}{3}u^{3}+\dfrac{1}{2}(1-c^{2})u^{2}+au+b},\quad B=-cA,\quad
C=-cu+a,\quad D=-cC\\
P^{x}=A,\quad P^{t}=-\dfrac{1}{2}v,\quad Q^{x}=C,\quad Q^{t}=\dfrac{1}{2}u,
\end{gather*}
and the system
\begin{equation}
\left\{
\begin{array}
[c]{l}%
u_{t}=-c\sqrt{\dfrac{1}{3}u^{3}+\dfrac{1}{2}(1-c^{2})u^{2}+au+b}\\
u_{x}=\sqrt{\dfrac{1}{3}u^{3}+\dfrac{1}{2}(1-c^{2})u^{2}+au+b}\\
v_{t}=c^{2}u-ca\\
v_{x}=-cu+a
\end{array}
\right.  ,\label{19}%
\end{equation}
(locally) correspond to an HJ subdiffiety of (\ref{17}). In the case $a=b=0$,
$0<c^{2}<1$ solutions of (\ref{19}) are
\begin{align}
u &  =-\dfrac{3}{2}(1-c^{2})\mathrm{sech}^{2}\left[  \dfrac{1}{4}%
\sqrt{2(1-c^{2})}(x-x_{0}-ct)\right]  ,\label{20}\\
v &  =v_{0}-3\sqrt{2(1-c^{2})}\mathrm{tanh}\left[  \dfrac{1}{4}\sqrt
{2(1-c^{2})}(x-x_{0}-ct)\right]  ,\nonumber
\end{align}
with $x_{0},v_{0}$ integration constants. They are solutions of the EL
equations (\ref{17}). In particular (\ref{20}) are well-known
\textquotedblleft\emph{travelling wave}\textquotedblright\ solutions of the
\textquotedblleft good\textquotedblright\ Boussinesq equation.

\end{document}